\def\BibTeX{{\rm B\kern-.05em{\sc i\kern-.025em b}\kern-.08em
    T\kern-.1667em\lower.7ex\hbox{E}\kern-.125emX}}
\newtheorem{definition}{Definition}[section]
\newtheorem{assumption}{Assumption}[section]
\newtheorem{lemma}{Lemma}[section]
\newtheorem{theorem}{Theorem}[section]
\newtheorem{remark}{Remark}
\begin{document}
\title{Convergence Analysis of Noisy Distributed Gradient Descent for Non-convex Optimization --- Saddle Point Escape}
\author{Lei Qin, Michael Cantoni, Member, IEEE, and Ye Pu, Member, IEEE
\thanks{This work was supported by a Melbourne Research Scholarship and the Australian Research Council (DE220101527 and DP210103272).}
\thanks{L. Qin, M. Cantoni, and Y. Pu are with the Department of Electrical and Electronic Engineering, University of Melbourne, Parkville VIC 3010, Australia \texttt{\small leqin@student.unimelb.edu.au, \{cantoni, ye.pu\}@unimelb.edu.au}.}}
\maketitle

\begin{abstract}
This paper studies a noisy variant of consensus-based distributed gradient descent (\textbf{DGD}) for minimizing finite sums of smooth but possibly non-convex functions over a network. By introducing stochastic perturbations into the local gradient updates, the proposed noisy distributed gradient descent (\textbf{NDGD}) algorithm enables escape from saddle points while preserving distributed implementation and consensus structure. Under mild regularity conditions and sufficiently small step-size and noise variance, it is shown that the iterates of all agents converge with high probability to a neighborhood of a common local minimizer. In addition, the proposed method achieves convergence complexity comparable to centralized first-order saddle-point escape methods, reducing exponential dependence on problem dimension to polynomial dependence, which is particularly important in distributed optimization where the overall problem dimension scales with both the local dimension and the number of agents. Numerical experiments demonstrate that \textbf{NDGD} outperforms standard \textbf{DGD} in escaping saddle points and achieving improved solution quality in non-convex settings.
\end{abstract}

\begin{IEEEkeywords}
Non-convex optimization; consensus-based distributed optimization; first-order methods; random perturbations; escaping saddle points
\end{IEEEkeywords}

\section{Introduction}
\label{sec: Introduction}

\color{blue}Collaborative distributed optimization across networks has attracted significant attention due to its applications in large-scale engineered systems, including multi-agent control \cite{nedic2018distributed,molzahn2017survey,camponogara2010distributed}, sensor networks \cite{rabbat2004distributed,johansson2007simple}, and machine learning \cite{boyd2011distributed,konevcny2015federated,mcmahan2017communication}. In particular, such frameworks provide guidance for practical applications such as distributed model predictive control (MPC), multi-agent reinforcement learning (RL), and large-scale resource allocation in networked systems, where coordinated decision-making is required. \color{black}

We consider objective functions of the form
\begin{gather}
    \label{eq: Unconstrained optimization problem}
     f(\mathbf{x}) =  \sum\nolimits_{i=1}^{m} f_{i}(\mathbf{x}),
\end{gather}
where each $f_{i}: \mathbb{R}^{n} \rightarrow \mathbb{R}$ is smooth but potentially non-convex. The goal is to design a distributed algorithm that enables $m$ agents to collaboratively compute a second-order stationary point of~\eqref{eq: Unconstrained optimization problem}. Agents communicate over an undirected graph $\mathcal{G}(\mathcal{V},\mathcal{E})$, where each agent $i$ has access only to $f_i$ and exchanges information with its neighbors.

We study a perturbed variant of consensus-based distributed gradient descent (\textbf{DGD}) \cite{nedic2009distributed}. To overcome saddle-point stagnation in non-convex settings, random perturbations are injected into the iterates. Our preliminary work \cite{qin2023second} considered this idea under more restrictive assumptions. The update for agent $i$ at iteration $k$ is
\begin{align}
\label{eq: NDGD}
    \hat{\mathbf{x}}^{k+1}_{i} = \sum\nolimits_{j=1}^{m} w_{ij}\hat{\mathbf{x}}^{k}_j - \alpha(\nabla f_{i}(\hat{\mathbf{x}}^{k}_{i}) + \mathbf{n}_{i}^{k}),
\end{align}
where $\alpha > 0$ is a fixed step-size, $\mathbf{W}$ is the mixing matrix, and $\mathbf{n}_{i}^{k}$ is a stochastic perturbation. We refer to this algorithm as \textbf{NDGD}. A constant step-size is adopted to simplify analysis and enable explicit characterization of the trade-off between convergence accuracy and robustness to noise, which is essential for establishing second-order guarantees. \color{blue} The key challenge lies in characterizing saddle point escape in the presence of consensus dynamics and network-induced coupling, where local perturbations interact with global agreement constraints. \color{black}
\subsection{Main contributions} 

\begin{itemize}
    \item Under suitable first- and second-order regularity conditions, we show that, with sufficiently small step-size and noise variance, the iterates converge with high probability to a neighborhood of a local minimizer of~\eqref{eq: Unconstrained optimization problem} (Theorem~\ref{the: Second order guarantee}), jointly capturing optimality gap and consensus error.

    \item We establish finite-time convergence rates comparable to centralized smooth non-convex optimization, highlighting the roles of step-size, noise, and network structure.

    \item Numerical experiments demonstrate improved saddle-point escape over standard \textbf{DGD}, while maintaining stable convergence and consensus (Section~\ref{sec: Numerical Examples}).
\end{itemize}

Our analysis reformulates \textbf{NDGD} in a lifted centralized form, enabling the use of classical gradient dynamics. By combining this with concentration inequalities for stochastic increments, we establish convergence to approximate second-order stationary points with probabilistic guarantees.

\subsection{Related work}

\textbf{DGD}, introduced in \cite{nedic2009distributed}, is a fundamental consensus-based method where agents combine local gradient steps with neighbor averaging. Dual-decomposition methods \cite{terelius2011decentralized} offer an alternative but typically incur higher communication costs.

Variants of \textbf{DGD} have been extensively studied \cite{nedic2010constrained,jakovetic2014fast}. While diminishing step-sizes are commonly used, gradient tracking enables exact convergence with constant step-size \cite{shi2015extra,nedic2017achieving,di2015distributed,di2016next}. Convergence guarantees for convex settings are well established \cite{yuan2016convergence,tsianos2012distributed,qu2017harnessing}, and have been extended to relaxed assumptions \cite{yi2022primal}. For non-convex problems, \textbf{DGD} converges to first-order stationary points \cite{zeng2018nonconvex}, but these may include saddle points.

Although saddle points are almost surely avoided under certain conditions \cite{daneshmand2020second,swenson2019distributed,swenson2022distributed,qin2025convergence}, escaping them can require exponential time in worst-case scenarios \cite{du2017gradient}. While second-order methods can address this issue, they are typically impractical in distributed settings due to their computational cost.

Recent work introduces perturbations to enable efficient saddle-point escape. Building on centralized results \cite{ge2015escaping,jin2017escape,jin2021nonconvex}, distributed variants inject noise into gradient updates. Prior work \cite{vlaski2021distributed1,vlaski2021distributed2} shows that average iterates may approach second-order stationary points, but does not guarantee convergence of individual agents. More recent approaches \cite{wang2023decentralized,bo2024quantization} improve performance but rely on diminishing step-sizes. Our prior work \cite{qin2023second} combines lifting techniques \cite{yuan2016convergence,zeng2018nonconvex,daneshmand2020second} with stochastic analysis to establish convergence guarantees. The present paper further strengthens these results by refining rate bounds and explicitly characterizing local regularity conditions.



\subsection{Notation}
\label{sec: Notation}
Let $\mathbf{I}_n$ denote the $n\times n$ identity matrix, $\bm{1}_n$ denote the $n$-vector with all entries equal to $1$, and both $\mathbf{A}_{ij}$ and $a_{ij}$ denote the entry in row $i$ and column $j$ of the matrix $\mathbf{A}$. For a square symmetric matrix $\mathbf{B}$, the eigenvalues are real valued, and we use $\lambda_{\mathrm{min}}(\mathbf{B})$, $\lambda_{\mathrm{max}}(\mathbf{B})$, and $\|\mathbf{B}\|$ to denote the minimum eigenvalue, maximum eigenvalue, and spectral norm, respectively. Further, $\mathbf{B}_{1}\prec(\mathrm{resp.}\preceq)\,\mathbf{B}_{2}$ is equivalent to $\lambda_{\mathrm{min}}(\mathbf{B}_{2}-\mathbf{B}_{1})>(\mathrm{resp.}\geq)\, 0$. The Kronecker product is denoted by $\otimes$. The distance from the point $\mathbf{x} \in \mathbb{R}^{n}$ to a given set $\mathcal{Y} \subseteq \mathbb{R}^{n}$ is denoted by $\textup{dist}(\mathbf{x}, \mathcal{Y})\coloneqq \inf_{\mathbf{y} \in \mathcal{Y}}\|\mathbf{x} - \mathbf{y}\|$. We say that a point $\mathbf{x}$ is $\delta$-close to a point $\mathbf{y}$ (resp., a set $\mathcal{Y}$) if $\|\mathbf{x} - \mathbf{y}\| \le \delta$ (resp., $\textup{dist}(\mathbf{x},\mathcal{Y}) \le \delta$). Let $\mathbf{x} \sim \mathcal{N}(\bm{\mu},\bm{\Sigma})$ denote the multivariate normal distribution of a $n$-dimensional random vector $\mathbf{x} \in \mathbb{R}^{n}$ with mean $\bm{\mu} \in \mathbb{R}^{n}$ and variance $\bm{\Sigma} \in \mathbb{R}^{n \times n}$. Let $\mathbb{E}$ and $\mathbb{P}$ denote the expectation and probability operators, respectively. Let $\lceil \cdot \rceil$ denote the ceiling function. Unless stated otherwise, all iteration indices are positive integers, and superscript indexing is used to denote iterates.

\section{Assumptions and Preliminary Results}
\label{sec: Assumptions and Supporting Results}
This section contains a summary of basic definitions, assumptions, and two preliminary results pertaining to the aforementioned lifting approach within the context of standard (noise free) fixed step-size \textbf{DGD}. The second result relates the local minimizers of~\eqref{eq: Unconstrained optimization problem} to those of a lifted representation of this objective function, under local regularity conditions that are directly formulated in terms of the component functions $f_i$. The relationships underpin the subsequent development of the main results on \textbf{NDGD}, in which the gradient is randomly perturbed at each iteration as per~\eqref{eq: NDGD}. The complete proof is provided in \cite[Section~IV-A]{qin2025convergence}.

\subsection{Definitions and Assumptions}
\begin{definition}[First order stationarity]
\label{def: First order stationary point}
    For differentiable function 
$h:\mathbb{R}^{n}\rightarrow\mathbb{R}$, the point $\mathbf{x}$ is said to be first-order stationary if $\|\nabla h(\mathbf{x})\| = 0$, where $\nabla h: \mathbb{R}^{n} \rightarrow \mathbb{R}^{n}$ denotes the gradient of $h$.
\end{definition}

\begin{definition}[Second order stationarity]
\label{def: Local minimizer and saddle}
    For twice differentiable function $h:\mathbb{R}^{n}\rightarrow\mathbb{R}$, a first-order stationary point $\mathbf{x}$ is: (i) a local minimizer, if $\nabla^{2} h(\mathbf{x}) \succ \bm{0}$; (ii) a local maximizer, if $\nabla^{2} h(\mathbf{x}) \prec \bm{0}$; and (iii) a \color{blue}strict \color{black} saddle point if $\lambda_{\min}(\nabla^{2} h(\mathbf{x})) < 0$ and $\lambda_{\max}(\nabla^{2} h(\mathbf{x})) > 0$, where $\nabla^{2} h:\mathbb{R}^{n} \rightarrow \mathbb{R}^{n\times n}$ denotes the Hessian of $h$.
\end{definition}

\begin{assumption}[Strict saddle]
\label{ass: Strict saddle}
    The function $f:\mathbb{R}^{n}\rightarrow\mathbb{R}$ in \eqref{eq: Unconstrained optimization problem} is such that for all first-order stationary points $\mathbf{x}$, either $\lambda_{\min} (\nabla^{2} f (\mathbf{x})) > 0$ (i.e., $\mathbf{x}$ is a local minimizer), or $\lambda_{\min} (\nabla^{2} f (\mathbf{x})) < 0$ (i.e., $\mathbf{x}$ is a saddle point or a maximizer).
\end{assumption}

\begin{assumption}[Lipschitz continuity]
\label{ass: Lipschitz}
    Each $f_{i}$ in \eqref{eq: Unconstrained optimization problem} is both $ L_{f_{i}}^{g}$-gradient Lipschitz and  $L_{f_{i}}^{H}$-Hessian Lipschitz, i.e., for all $\mathbf{x},\mathbf{y}\in\mathbb{R}^{n}$, and each $i \in \mathcal{V}$, $\|\nabla f_{i}(\mathbf{x}) - \nabla f_{i}(\mathbf{y})\| \le L_{f_{i}}^{g} \|\mathbf{x}-\mathbf{y}\|$ and $\|\nabla^{2} f_{i}(\mathbf{x}) - \nabla^{2} f_{i}(\mathbf{y})\| \le L_{f_{i}}^{H} \|\mathbf{x}-\mathbf{y}\|$.
\end{assumption}

\begin{assumption}[Coercivity]
\label{ass: Coercivity}
    Each $f_{i}$ in \eqref{eq: Unconstrained optimization problem} is coercive (i.e., $f_i(\mathbf{x})\rightarrow\infty$ as $\|\mathbf{x}\|\rightarrow\infty$, which is equivalent to compactness of all sublevel sets by continuity), and $\|\nabla f\|$ is also coercive.
\end{assumption}

\begin{remark}
    Coercivity of $\|\nabla f\|$ is commonly satisfied in optimization. In particular, adding a regularization term of the form $\frac{\lambda}{2} \|\mathbf{x}\|^2$ to the objective function ensures this condition. 
\end{remark}

\begin{assumption}[Bounded gradient disagreement]
\label{ass: Bounded gradient disagreement}
    There exists constant $D > 0$ such that for and two agents $i,j \in \mathcal{V}$, and every $\mathbf{x} \in \mathbb{R}^{n}$, $f_{i}$ and $f_{j}$ in \eqref{eq: Unconstrained optimization problem} satisfy $\|\nabla f_{i}(\mathbf{x}) - \nabla f_{j}(\mathbf{x})\| \le D$.
\end{assumption}

\begin{remark}
    Assumption \ref{ass: Bounded gradient disagreement} is weaker than the more common assumption of uniformly bounded gradients. It can be satisfied when all agents are minimizing the same cost function with the same regularizers, with different parameters. For further discussions, see \cite{vlaski2021distributed1} and \cite{swenson2019annealing}.
\end{remark}

\begin{assumption}[Connected network]
\label{ass: Network}
    The undirected graph $\mathcal{G}(\mathcal{V},\mathcal{E})$ is connected.
\end{assumption}

\begin{assumption}[Mixing matrix]
\label{ass: Mixing matrix}
    The mixing matrix $\mathbf{W} \in \mathbb{R}^{m \times m}$ relating to the agent iteration \eqref{eq: NDGD}, satisfies the following conditions:
    \begin{enumerate}
    \renewcommand{\labelenumi}{\roman{enumi})}
        \item $\mathbf{W}$ is consistent with the graph $\mathcal{G}(\mathcal{V},\mathcal{E})$ in the sense that $w_{ii} > 0$ for all $i \in \mathcal{V}$, $w_{ij} > 0$ if $(i,j) \in \mathcal{E}$, and $w_{ij} = 0$ otherwise.
        \item $\mathbf{W} = \mathbf{W}^{\top}$;
        \item $\textup{null}\{\mathbf{I}_{m} - \mathbf{W}\} = \textup{span}\{\bm{1}_{m}\}$;
        \item $\bm{0} \prec \mathbf{W} \preceq \mathbf{I}_{m}$.
    \end{enumerate}
\end{assumption}

\begin{remark}
\label{rem: Mixing matrix}
    Conditions ii), iii), and iv) in Assumption~\ref{ass: Mixing matrix} are satisfied if $\mathbf{W}$ is a symmetric doubly stochastic matrix. In particular, this implies that $\mathbf{W} \bm{1}_{m} = \bm{1}_{m}$ and $\bm{1}_{m}^{\top} \mathbf{W} = \bm{1}_{m}^{\top}$.
\end{remark}

\begin{assumption}[Random perturbation]
\label{ass: Random perturbation}
    Given $\sigma > 0$, for each $k > 0$, and $i  \in \mathcal{V}$, the random perturbation $\mathbf{n}_{i}^{k}$ in \eqref{eq: NDGD} satisfies that $\mathbf{n}_{i}^{k} \sim \mathcal{N}(\bm{0}, \sigma^{2}\mathbf{I}_{n})$.
\end{assumption}

\begin{remark}
    If Assumption \ref{ass: Random perturbation} holds, then for each $k > 0$, the random perturbation
    \begin{gather*}
        \mathbf{n}^{k} =[(\mathbf{n}_{1}^{k})^{\top},\cdots, (\mathbf{n}_{m}^{k})^{\top}]^{\top} \in (\mathbb{R}^{n})^{m}
    \end{gather*}
    is i.i.d satisfying $\mathbf{n}^{k} \sim \mathcal{N}(\bm{0},\sigma^{2}\mathbf{I}_{mn})$ and the expected value $\mathbb{E}[\|\mathbf{n}^{k}\|^{2}] = mn\sigma^{2}$.
\end{remark}

\subsection{Distributed Gradient Descent}
In this section, we examine the standard \textbf{DGD} algorithm through the lens of an auxiliary function in a lifted setting. This methodology is subsequently applied to \textbf{NDGD} with random perturbations as per \eqref{eq: NDGD}. We first introduce the lifted objective function
\begin{gather}
    \begin{gathered}
        \label{eq: Constrained optimization problem}
        F(\hat{\mathbf{x}}) = \sum_{i=1}^{m} f_{i}(\hat{\mathbf{x}}_{i}),
    \end{gathered}    
\end{gather}
where $\hat{\mathbf{x}} = [\hat{\mathbf{x}}_{1}^{\top},\cdots, \hat{\mathbf{x}}_{m}^{\top}]^{\top} \in (\mathbb{R}^{n})^{m}$ \color{blue}is the stacked vector collecting the local copies $\hat{\mathbf{x}}_{i} \in \mathbb{R}^{n}$ maintained by the $m$ agents\color{black}. Note that, $\nabla F(\hat{\mathbf{x}}) = [\nabla f_{1}(\hat{\mathbf{x}}_{1})^{\top},\cdots, \nabla f_{m}(\hat{\mathbf{x}}_{m})^{\top}]^{\top}$ and $\nabla^{2} F(\hat{\mathbf{x}}) = \bigoplus_{i=1}^{m} \nabla^{2} f_{i}(\hat{\mathbf{x}}_{i})$. In particular, $\nabla^{2} F$ is block diagonal.

\begin{remark}
\label{rem: Lipschitz}
    \color{blue}The following constants are introduced for later use for the convergence analysis  in Section \ref{sec: Main Results}. \color{black} If Assumption \ref{ass: Lipschitz} holds, then $F$ defined in \eqref{eq: Constrained optimization problem} has $L_{F}^{g}$-Lipschitz continuous gradient and $L_{F}^{H}$-Lipschitz continuous Hessian with
    \begin{gather*}
        L_{F}^{g} = \max_{i} \{L_{f_{i}}^{g}\},\qquad L_{F}^{H} = \max_{i} \{L_{f_{i}}^{H}\}.
    \end{gather*}
\end{remark}

\begin{definition}[Consensual first order stationarity]
\label{def: consensual first order stationary point}
    For $F:(\mathbb{R}^{n})^{m} \rightarrow \mathbb{R}$ in \eqref{eq: Constrained optimization problem}, the point $\hat{\mathbf{x}} \in (\mathbb{R}^{n})^{m}$ is said to be a consensual first order stationary point if it satisfies following conditions: 
    \begin{enumerate}
    \renewcommand{\labelenumi}{\roman{enumi})}
        \item $\hat{\mathbf{x}} = \bm{1}_{m}\otimes\operatorname{av}(\hat{\mathbf{x}})$ where
        \begin{align} 
        \label{eq: Average of xhat}
             \operatorname{av}(\hat{\mathbf{x}})=\frac{1}{m} (\bm{1}_{m} \otimes \mathbf{I}_{n})^\top \hat{\mathbf{x}} \in\mathbb{R}^{n};
        \end{align}
        \item $(\bm{1}_{m} \otimes \mathbf{I}_{n})^{\top} \nabla F(\hat{\mathbf{x}}) = \sum_{i=1}^{m} \nabla f_{i}(\hat{\mathbf{x}}_{i}) = \bm{0}$.
    \end{enumerate}
\end{definition}

If condition i) holds, then $\hat{\mathbf{x}}$ is in consensus, in the following sense: $\hat{\mathbf{x}}_{i} = \hat{\mathbf{x}}_{j}$ for all $i$, $j \in \mathcal{V}$. Further, if condition ii) also holds, then for every $i \in \mathcal{V}$, the point $\hat{\mathbf{x}}_{i}$ is first-order stationary for $f$.

\begin{definition}[Consensual second-order stationarity]
\label{def: Approximately consensual second order stationary point}
    For $F:(\mathbb{R}^{n})^{m} \rightarrow \mathbb{R}$ in \eqref{eq: Constrained optimization problem}, the point $\hat{\mathbf{x}} \in (\mathbb{R}^{n})^{m}$ is said to be a $(\eta,\epsilon,\gamma)$-consensual second-order stationary point if it satisfies the following:
    \begin{enumerate}
    \renewcommand{\labelenumi}{\roman{enumi})}
        \item $\|\hat{\mathbf{x}} - \bm{1}_{m}\otimes\operatorname{av}(\hat{\mathbf{x}})\| \le \eta$ with $\operatorname{av}(\cdot)$ as per~\eqref{eq: Average of xhat};
        \item $\|(\bm{1}_{m} \otimes \mathbf{I}_{n})^{\top} \nabla F(\hat{\mathbf{x}})\| = \|\sum_{i=1}^{m} \nabla f_{i}(\hat{\mathbf{x}}_{i})\| \le \epsilon$;
        \item $\lambda_{\min} (H(\hat{\mathbf{x}})) = \lambda_{\min} (\sum_{i=1}^{m} \nabla^{2} f_{i}(\hat{\mathbf{x}}_{i})) \ge -\gamma$, where $H(\hat{\mathbf{x}}) = (\bm{1}_{m} \otimes \mathbf{I}_{n})^{\top} \nabla^{2} F(\hat{\mathbf{x}}) (\bm{1}_{m} \otimes \mathbf{I}_{n})$. 
    \end{enumerate}
\end{definition}

Recall the standard fixed step-size \textbf{DGD} algorithm \cite{nedic2009distributed}:
\begin{align*}
    \hat{\mathbf{x}}^{k+1}_{i} = \sum_{j=1}^{m} w_{ij}\hat{\mathbf{x}}^{k}_j - \alpha\nabla f_{i}(\hat{\mathbf{x}}^{k}_{i}),
\end{align*}
for $i \in \mathcal{V}$, with $\alpha > 0$. In aggregate, this can be re-written as
\begin{align}
\label{eq: DGD aggregate form}
    \hat{\mathbf{x}}^{k+1} = \hat{\mathbf{W}} \hat{\mathbf{x}}^{k} - \alpha \nabla F(\hat{\mathbf{x}}^{k}),
\end{align}
where $\hat{\mathbf{W}} := \mathbf{W} \otimes \mathbf{I}_n$. As proposed in some earlier works, including \cite{yuan2016convergence, zeng2018nonconvex, daneshmand2020second}, the convergence properties can be studied via an auxiliary function
\begin{align}
\label{eq: Q}
    \begin{aligned}
        Q_{\alpha}(\hat{\mathbf{x}}) &= \sum_{i=1}^{m} f_{i}(\hat{\mathbf{x}}_{i}) + \frac{1}{2\alpha}\sum_{i=1}^{m}\sum_{j=1}^{m}(\mathbf{I}_{m} - \mathbf{W})_{ij} (\hat{\mathbf{x}}_{i})^{\top}(\hat{\mathbf{x}}_{j})\\
        &= F(\hat{\mathbf{x}}) + \frac{1}{2\alpha}\|\hat{\mathbf{x}}\|^{2}_{\mathbf{I}_{mn}-\hat{\mathbf{W}}}, 
    \end{aligned}
\end{align}
which consists of the objective function in \eqref{eq: Constrained optimization problem} and a quadratic penalty, which depends on the step-size and the mixing matrix. The \textbf{DGD} update \eqref{eq: DGD aggregate form} for \eqref{eq: Constrained optimization problem} can be interpreted as an instance of the standard gradient descent algorithm applied to \eqref{eq: Q}, i.e.,
\begin{align}
\label{eq: GD on Q}
    \hat{\mathbf{x}}^{k+1} = \hat{\mathbf{x}}^{k} - \alpha \nabla Q_{\alpha}(\hat{\mathbf{x}}^{k}).
\end{align}
Iterating \eqref{eq: DGD aggregate form} and \eqref{eq: GD on Q} from the same initialization yields the same sequence of iterates; see \cite[Lemma 1]{zeng2018nonconvex}. Similarly, the \textbf{NDGD} update \eqref{eq: NDGD} for \eqref{eq: Constrained optimization problem} can be interpreted as an instance of the noisy gradient descent algorithm applied to \eqref{eq: Q}, i.e.,
\begin{align}
\label{eq: NDGD2}
    \hat{\mathbf{x}}^{k+1} = \hat{\mathbf{x}}^{k} - \alpha (\nabla Q_{\alpha}(\hat{\mathbf{x}}^{k}) + \mathbf{n}^{k}).
\end{align}

\begin{remark}
    If Assumption \ref{ass: Lipschitz} holds, \color{blue}with $L_{F}^{g}$ and $L_{F}^{H}$ defined in Remark~\ref{rem: Lipschitz}, \color{black} then $Q_{\alpha}$ in \eqref{eq: Q} has $L_{Q_{\alpha}}^{g}$-Lipschitz continuous gradient and $L_{Q_{\alpha}}^{H}$-Lipschitz continuous Hessian, where
    \begin{gather*}
        L_{Q_{\alpha}}^g = L_{F}^{g} + \alpha^{-1} (1-\lambda_{\min}(\mathbf{W})),\quad \text{and} \quad L_{Q_{\alpha}}^{H} =  L_{F}^{H}.
    \end{gather*}
\end{remark}

\begin{remark}
    If Assumption \ref{ass: Coercivity} holds, then for any fixed $\alpha > 0$, $Q_{\alpha}$ is coercive. In particular,  for all $c>0$, the sublevel set $\mathcal{S}_c = \{ \hat{\mathbf{x}} \in (\mathbb{R}^{n})^{m} : Q_{\alpha}(\hat{\mathbf{x}}) \leq c \}$ is contained inside the set $\mathcal{S}_c^{\prime} = \{ \hat{\mathbf{x}} \in (\mathbb{R}^{n})^{m} : F(\hat{\mathbf{x}}) \leq c \}$, since $\frac{1}{2\alpha}\|\hat{\mathbf{x}}\|^{2}_{\mathbf{I}_{mn}-\hat{\mathbf{W}}} \ge 0$, i.e., $\mathcal{S}_c \subseteq \mathcal{S}_c^{\prime}$. By Assumption \ref{ass: Coercivity}, $F$ is coercive, whereby every $\mathcal{S}_c^{\prime}$ is compact by continuity, and therefore, each closed subset $\mathcal{S}_c$ is bounded. Thus, $Q_{\alpha}$ is coercive.
\end{remark}

\begin{definition}[Local minimizers]
\label{def: Local minimizers}
    Let $\mathcal{X}_{f}^{\star}$ and $\mathcal{X}_{Q_{\alpha}}^{\star}$ denote the sets of local minimizers of $f$ and $Q_{\alpha}$, respectively:
    \begin{align}
    \label{eq: minimizer set}
        \begin{aligned}
            \mathcal{X}_{f}^{\star} = \{\mathbf{x} \in \mathbb{R}^{n}:&~\nabla f(\mathbf{x})=\bm{0},~\nabla^2 f(\mathbf{x}) \succ \bm{0}\},\\
            \mathcal{X}_{Q_{\alpha}}^{\star} = \{\hat{\mathbf{x}} \in (\mathbb{R}^{n})^{m}:&~\nabla Q_{\alpha}(\hat{\mathbf{x}}) = \bm{0},~\nabla^2 Q_{\alpha}(\hat{\mathbf{x}}) \succ \bm{0}\}. 
        \end{aligned}
    \end{align}    
\end{definition}

Next, we establish a second-order relationship between $F$ and $Q_{\alpha}$. The complete proof is provided in \cite[Section~II-B]{qin2025convergence}.

\begin{lemma}
\label{lem: Minimum eigenvalue of Q}
    Given any $\alpha > 0$, for all $\hat{\mathbf{x}} \in (\mathbb{R}^{n})^{m}$,
    \begin{gather*}
        \lambda_{\min} (\nabla^{2} F(\hat{\mathbf{x}})) \le \lambda_{\min}(\nabla^{2} Q_{\alpha}(\hat{\mathbf{x}})) \le \frac{1}{m}  \lambda_{\min} (\sum_{i=1}^{m} \nabla^{2} f_{i}(\hat{\mathbf{x}}_{i})).
    \end{gather*}
\end{lemma}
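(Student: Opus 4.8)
The plan is to unwind the definitions of the two Hessians and exploit the block-diagonal structure of $\nabla^2 F$ together with the Kronecker structure of the penalty term. Write $\nabla^2 Q_\alpha(\hat{\mathbf{x}}) = \nabla^2 F(\hat{\mathbf{x}}) + \alpha^{-1}(\mathbf{I}_{mn} - \hat{\mathbf{W}})$, where $\nabla^2 F(\hat{\mathbf{x}}) = \bigoplus_{i=1}^m \nabla^2 f_i(\hat{\mathbf{x}}_i)$ and $\hat{\mathbf{W}} = \mathbf{W}\otimes\mathbf{I}_n$. For the lower bound, note that by Assumption~\ref{ass: Mixing matrix} iv) we have $\mathbf{I}_m - \mathbf{W} \succeq 0$, hence $\mathbf{I}_{mn} - \hat{\mathbf{W}} \succeq 0$, so adding $\alpha^{-1}(\mathbf{I}_{mn}-\hat{\mathbf{W}})$ to $\nabla^2 F(\hat{\mathbf{x}})$ can only raise the minimum eigenvalue; by Weyl's inequality (or just the variational characterization of $\lambda_{\min}$) this gives $\lambda_{\min}(\nabla^2 F(\hat{\mathbf{x}})) \le \lambda_{\min}(\nabla^2 Q_\alpha(\hat{\mathbf{x}}))$.

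For the upper bound I would use the Rayleigh quotient with a cleverly chosen consensual test vector. Let $\mathbf{v}\in\mathbb{R}^n$ be a unit eigenvector of $\sum_{i=1}^m \nabla^2 f_i(\hat{\mathbf{x}}_i)$ associated with its smallest eigenvalue, and set $\hat{\mathbf{v}} = \tfrac{1}{\sqrt m}(\bm{1}_m \otimes \mathbf{v}) \in (\mathbb{R}^n)^m$, which is a unit vector. Then $\hat{\mathbf{v}}^\top (\mathbf{I}_{mn}-\hat{\mathbf{W}})\hat{\mathbf{v}} = \tfrac1m (\bm{1}_m^\top(\mathbf{I}_m-\mathbf{W})\bm{1}_m)\,(\mathbf{v}^\top\mathbf{v}) = 0$ because $\bm{1}_m \in \textup{null}\{\mathbf{I}_m - \mathbf{W}\}$ by Assumption~\ref{ass: Mixing matrix} iii). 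Meanwhile $\hat{\mathbf{v}}^\top \nabla^2 F(\hat{\mathbf{x}})\hat{\mathbf{v}} = \tfrac1m \sum_{i=1}^m \mathbf{v}^\top \nabla^2 f_i(\hat{\mathbf{x}}_i)\mathbf{v} = \tfrac1m \mathbf{v}^\top\big(\sum_{i=1}^m \nabla^2 f_i(\hat{\mathbf{x}}_i)\big)\mathbf{v} = \tfrac1m \lambda_{\min}(\sum_i \nabla^2 f_i(\hat{\mathbf{x}}_i))$. Combining, $\hat{\mathbf{v}}^\top \nabla^2 Q_\alpha(\hat{\mathbf{x}})\hat{\mathbf{v}} = \tfrac1m \lambda_{\min}(\sum_i \nabla^2 f_i(\hat{\mathbf{x}}_i))$, and since $\lambda_{\min}(\nabla^2 Q_\alpha(\hat{\mathbf{x}})) \le \hat{\mathbf{v}}^\top \nabla^2 Q_\alpha(\hat{\mathbf{x}})\hat{\mathbf{v}}$ for the unit vector $\hat{\mathbf{v}}$, the claimed upper bound follows.

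I do not anticipate a serious obstacle here; the result is essentially bookkeeping with Kronecker products once the right test vector is identified. The only point requiring a little care is making sure the chosen $\hat{\mathbf{v}}$ is genuinely a unit vector and that the cross term involving $\mathbf{I}_{mn}-\hat{\mathbf{W}}$ vanishes exactly — both follow cleanly from $\hat{\mathbf{v}}$ lying in the consensus subspace $\textup{span}\{\bm{1}_m\}\otimes\mathbb{R}^n$, on which $\hat{\mathbf{W}}$ acts as the identity. It is worth remarking that this is precisely the subspace relevant to Definition~\ref{def: Approximately consensual second order stationary point}, so the lemma is exactly the statement that the penalty term does not distort second-order information along consensual directions, which is what makes $\lambda_{\min}(H(\hat{\mathbf{x}}))$ a meaningful surrogate.
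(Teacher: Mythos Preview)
Your proposal is correct and matches the paper's proof essentially line for line: the lower bound via $\mathbf{I}_{mn}-\hat{\mathbf{W}}\succeq 0$, and the upper bound via the Rayleigh quotient evaluated at the consensual unit vector $\hat{\mathbf{v}} = \tfrac{1}{\sqrt{m}}\bm{1}_m\otimes\mathbf{v}$ with $\mathbf{v}$ a minimizing eigenvector of $\sum_i \nabla^2 f_i(\hat{\mathbf{x}}_i)$. The only cosmetic difference is that the paper writes $(\mathbf{I}_{mn}-\hat{\mathbf{W}})\hat{\mathbf{v}}=0$ directly rather than expanding the quadratic form, but the substance is identical.
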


\subsection{Relationships Between Local Minimizers of f and Q$_{\alpha}$}
In this section, additional regularity conditions on $f$ are introduced. These local conditions are formulated from the perspective of ensuring the $i$-th component $\hat{\mathbf{x}}_{i}^{\star}$ of a local minimizer $\hat{\mathbf{x}}^{\star} \in \mathcal{X}_{Q_{\alpha}}^{\star}$, can be made arbitrarily close to the set $\mathcal{X}_{f}^{\star}$ by the selection of a sufficiently small step-size $\alpha > 0$. This expands upon the results  in \cite{daneshmand2020second} regarding the relationship between first order stationary points of $f$ and $Q_{\alpha}$.

\begin{assumption}[Local regularity]
\label{ass: Local convexity}
    There exist positive constants $\alpha^{\prime},~\gamma,~\delta,~\epsilon,~\eta,~\mu>0$ such that for any point $\hat{\mathbf{x}} = [\hat{\mathbf{x}}_{1}^{\top},\cdots, \hat{\mathbf{x}}_{m}^{\top}]^{\top} \in (\mathbb{R}^{n})^{m}$ satisfying $\|\hat{\mathbf{x}} - \bm{1}_{m} \otimes \operatorname{av}(\hat{\mathbf{x}})\| \le \eta$, with $\operatorname{av}(\cdot)$ as per~\eqref{eq: Average of xhat}, at least one of the following is true: 
    \begin{enumerate}
    \renewcommand{\labelenumi}{\roman{enumi})}
        \item $\|\sum_{i=1}^{m} \nabla f_{i}(\hat{\mathbf{x}}_{i})\| > \epsilon$;
        \item $\lambda_{\min} (\sum_{i=1}^{m} \nabla^{2} f_{i}(\hat{\mathbf{x}}_{i})) < -\gamma$;
        \item for all $0 < \alpha \le \alpha^{\prime}$, there exists $\hat{\mathbf{x}}^{\prime} \in (\mathbb{R}^{n})^{m}$ such that $\|\hat{\mathbf{x}}^{\prime} - \hat{\mathbf{x}}\| \le \delta$, $\sum_{j=1}^{m} w_{ij}(\hat{\mathbf{x}}_{j}^{\prime} - \hat{\mathbf{x}}_{i}^{\prime}) = \alpha \nabla f_{i}(\hat{\mathbf{x}}_{i}^{\prime}) \text{ for all } i \in \mathcal{V}$, and $\lambda_{\min}(\nabla^{2} F(\hat{\mathbf{y}})) \ge \mu~\text{for all } \hat{\mathbf{y}} \text{ with } \|\hat{\mathbf{y}} - \hat{\mathbf{x}}^{\prime}\| \le 3\delta$, i.e., $F$ is strongly convex in the vicinity of $\hat{\mathbf{x}}^\prime$.
    \end{enumerate}
\end{assumption}

Intuitively, the first two conditions in Assumption \ref{ass: Local convexity} relate to points $\hat{\mathbf{x}} \in (\mathbb{R}^{n})^{m}$ with small consensus error, $\eta>0$, that have significant gradient sum $(\bm{1}_{m} \otimes \mathbf{I}_{n})^{\top} \nabla F(\hat{\mathbf{x}}) = \sum_{i=1}^m \nabla f_{i}(\hat{\mathbf{x}}_{i})$, exceeding $\epsilon>0$, or substantially negative Hessian gain in at least one direction, of size exceeding $\gamma>0$. If i) fails, it means that in consensus preserving directions, the landscape of $F$ is flat around $\hat{\mathbf{x}}$. If ii) fails, it means there is no consensus preserving direction resulting in significant ascent of $F$. So if both i) and ii) fail, for all correspondingly consensual second-order stationary points as specified in Definition \ref{def: Approximately consensual second order stationary point}, there exists a robust local minimizer of $Q_{\alpha}$ in the vicinity of $\hat{\mathbf{x}}$ by condition iii). 
\color{blue}Assumption~\ref{ass: Local convexity} is only required for the convergence guarantees to a local minimizer in Theorem~\ref{the: Second order guarantee}, and is not needed for the approximate second-order stationarity result in Theorem~\ref{the: Approximate second order}. 
\color{black}

\begin{lemma}
\label{lem: Local minimizers of Q}
      Let Assumptions~\ref{ass: Strict saddle}, \ref{ass: Lipschitz}, \ref{ass: Coercivity}, \ref{ass: Network}, \ref{ass: Mixing matrix}, \ref{ass: Local convexity} hold. With $\alpha^{\prime}$, $\delta$, $\mu > 0$ as per Assumption \ref{ass: Local convexity}, there exists threshold $0 < \bar{\alpha}(\delta) \le \alpha^{\prime}$ such that for every $0 < \alpha \le \bar{\alpha}(\delta)$, 
      if 
      \begin{gather}
      \label{eq: consensus error at xstar}
          \hat{\mathbf{x}}^{\star} \in \{\hat{\mathbf{x}} \in\mathcal{X}_{Q_{\alpha}}^{\star}:~\|\hat{\mathbf{x}} - \bm{1}_{m} \otimes \operatorname{av}(\hat{\mathbf{x}})\| \le \frac{5}{\mu} \sqrt{\alpha}\},
      \end{gather}
      with $\operatorname{av}(\cdot)$ as per~\eqref{eq: Average of xhat},
      then for all $i\in\mathcal{V}$,
      \begin{gather*}
          \textup{dist} (\hat{\mathbf{x}}_{i}^{\star}, \mathcal{X}_{f}^{\star}) \le (1 + \frac{\sqrt{m} L_{F}^{g}}{\mu}) \frac{5}{\mu} \sqrt{\alpha}.
      \end{gather*}
\end{lemma}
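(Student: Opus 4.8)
The plan is to show that the assumed local minimizer $\hat{\mathbf{x}}^{\star}$ of $Q_{\alpha}$ must coincide with the point $\hat{\mathbf{x}}^{\prime}$ whose existence is guaranteed by the third alternative in Assumption~\ref{ass: Local convexity}, and then to exploit the local strong convexity of $F$ around $\hat{\mathbf{x}}^{\star}$ to locate a genuine local minimizer of $f$ close to the network average $\operatorname{av}(\hat{\mathbf{x}}^{\star})$; the asserted bound then follows by the triangle inequality together with~\eqref{eq: consensus error at xstar}. Throughout, write $B(\hat{\mathbf{z}},\rho) := \{\hat{\mathbf{y}} : \|\hat{\mathbf{y}} - \hat{\mathbf{z}}\| \le \rho\}$.

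First I would unpack $\nabla Q_{\alpha}(\hat{\mathbf{x}}^{\star}) = 0$. Since $\nabla Q_{\alpha}(\hat{\mathbf{x}}) = \nabla F(\hat{\mathbf{x}}) + \alpha^{-1}(\mathbf{I}_{mn} - \hat{\mathbf{W}})\hat{\mathbf{x}}$, this is equivalent to $\sum_{j} w_{ij}(\hat{\mathbf{x}}_{j}^{\star} - \hat{\mathbf{x}}_{i}^{\star}) = \alpha \nabla f_{i}(\hat{\mathbf{x}}_{i}^{\star})$ for all $i \in \mathcal{V}$ (using $\sum_{j} w_{ij} = 1$, from Assumption~\ref{ass: Mixing matrix}), i.e., exactly the displayed equation in the third alternative of Assumption~\ref{ass: Local convexity}; and premultiplying by $(\bm{1}_{m} \otimes \mathbf{I}_{n})^{\top}$ and using $\bm{1}_{m}^{\top} \mathbf{W} = \bm{1}_{m}^{\top}$ gives $\sum_{i} \nabla f_{i}(\hat{\mathbf{x}}_{i}^{\star}) = 0$. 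Choose $\bar{\alpha}(\delta) \le \alpha^{\prime}$ small enough that $\frac{5}{\mu}\sqrt{\bar{\alpha}(\delta)} \le \eta$, so that by~\eqref{eq: consensus error at xstar} the point $\hat{\mathbf{x}}^{\star}$ lies in the region where Assumption~\ref{ass: Local convexity} applies. The first alternative there fails since $\sum_{i} \nabla f_{i}(\hat{\mathbf{x}}_{i}^{\star}) = 0$ while $\epsilon > 0$, and the second fails since $\hat{\mathbf{x}}^{\star} \in \mathcal{X}_{Q_{\alpha}}^{\star}$ gives $\lambda_{\min}(\nabla^{2} Q_{\alpha}(\hat{\mathbf{x}}^{\star})) > 0$, whence Lemma~\ref{lem: Minimum eigenvalue of Q} forces $\lambda_{\min}(\sum_{i} \nabla^{2} f_{i}(\hat{\mathbf{x}}_{i}^{\star})) > 0$ while $\gamma > 0$. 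Hence the third alternative holds: there is $\hat{\mathbf{x}}^{\prime}$ with $\|\hat{\mathbf{x}}^{\prime} - \hat{\mathbf{x}}^{\star}\| \le \delta$ satisfying the same per-agent stationarity equation (so $\nabla Q_{\alpha}(\hat{\mathbf{x}}^{\prime}) = 0$) and with $\lambda_{\min}(\nabla^{2} F(\hat{\mathbf{y}})) \ge \mu$ for all $\hat{\mathbf{y}} \in B(\hat{\mathbf{x}}^{\prime}, 3\delta)$. Because $\nabla^{2} Q_{\alpha} = \nabla^{2} F + \alpha^{-1}(\mathbf{I}_{mn} - \hat{\mathbf{W}}) \succeq \nabla^{2} F$, the function $Q_{\alpha}$ is $\mu$-strongly convex on the convex ball $B(\hat{\mathbf{x}}^{\prime}, 3\delta)$; since $\hat{\mathbf{x}}^{\star}$ and $\hat{\mathbf{x}}^{\prime}$ are both stationary points of $Q_{\alpha}$ in this ball, strong monotonicity of $\nabla Q_{\alpha}$ there forces $\hat{\mathbf{x}}^{\star} = \hat{\mathbf{x}}^{\prime}$, so in fact $F$ is $\mu$-strongly convex on $B(\hat{\mathbf{x}}^{\star}, 3\delta)$.

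Next I would pass to $\mathbb{R}^{n}$. Put $\bar{\mathbf{x}} := \operatorname{av}(\hat{\mathbf{x}}^{\star})$. By~\eqref{eq: consensus error at xstar}, $\|\bm{1}_{m} \otimes \mathbf{x} - \hat{\mathbf{x}}^{\star}\| \le \sqrt{m}\, \|\mathbf{x} - \bar{\mathbf{x}}\| + \frac{5}{\mu}\sqrt{\alpha}$ for every $\mathbf{x} \in \mathbb{R}^{n}$, so after shrinking $\bar{\alpha}(\delta)$ further (to keep $\frac{5}{\mu}\sqrt{\alpha} \le \frac{3\delta}{2}$) the block-diagonality of $\nabla^{2} F$ together with its strong convexity on $B(\hat{\mathbf{x}}^{\star}, 3\delta)$ gives $\sum_{i} \nabla^{2} f_{i}(\mathbf{x}) \succeq \mu \mathbf{I}_{n}$, i.e., $f$ is $\mu$-strongly convex, on the ball $\{\mathbf{x} : \|\mathbf{x} - \bar{\mathbf{x}}\| \le r\}$ with $r := \frac{3\delta}{2\sqrt{m}}$. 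Using $\sum_{i} \nabla f_{i}(\hat{\mathbf{x}}_{i}^{\star}) = 0$, the gradient-Lipschitz bound of Assumption~\ref{ass: Lipschitz}, Cauchy--Schwarz, and~\eqref{eq: consensus error at xstar},
\[
\|\nabla f(\bar{\mathbf{x}})\| = \Big\| \sum_{i} \big( \nabla f_{i}(\bar{\mathbf{x}}) - \nabla f_{i}(\hat{\mathbf{x}}_{i}^{\star}) \big) \Big\| \le L_{F}^{g} \sum_{i} \|\bar{\mathbf{x}} - \hat{\mathbf{x}}_{i}^{\star}\| \le \sqrt{m}\, L_{F}^{g}\, \frac{5}{\mu}\sqrt{\alpha}.
\]
Shrinking $\bar{\alpha}(\delta)$ once more so that $\|\nabla f(\bar{\mathbf{x}})\| < \mu r$, I would invoke the elementary fact that a $\mu$-strongly convex function on $\{\mathbf{x} : \|\mathbf{x} - \bar{\mathbf{x}}\| \le r\}$ with $\|\nabla f(\bar{\mathbf{x}})\| < \mu r$ attains its minimum over that ball at an interior point $\mathbf{x}_{f}^{\star}$, which therefore satisfies $\nabla f(\mathbf{x}_{f}^{\star}) = 0$, $\nabla^{2} f(\mathbf{x}_{f}^{\star}) \succeq \mu \mathbf{I}_{n} \succ 0$ (so $\mathbf{x}_{f}^{\star} \in \mathcal{X}_{f}^{\star}$), and $\|\mathbf{x}_{f}^{\star} - \bar{\mathbf{x}}\| \le \|\nabla f(\bar{\mathbf{x}})\|/\mu$; the distance estimate follows in one line from strong monotonicity and Cauchy--Schwarz, and a boundary first-order-optimality argument excludes $\mathbf{x}_{f}^{\star}$ on the sphere $\|\mathbf{x} - \bar{\mathbf{x}}\| = r$. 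Combining, for every $i \in \mathcal{V}$,
\[
\textup{dist}(\hat{\mathbf{x}}_{i}^{\star}, \mathcal{X}_{f}^{\star}) \le \|\hat{\mathbf{x}}_{i}^{\star} - \mathbf{x}_{f}^{\star}\| \le \|\hat{\mathbf{x}}_{i}^{\star} - \bar{\mathbf{x}}\| + \|\bar{\mathbf{x}} - \mathbf{x}_{f}^{\star}\| \le \frac{5}{\mu}\sqrt{\alpha} + \frac{\sqrt{m}\, L_{F}^{g}}{\mu} \cdot \frac{5}{\mu}\sqrt{\alpha},
\]
using $\|\hat{\mathbf{x}}_{i}^{\star} - \bar{\mathbf{x}}\| \le \|\hat{\mathbf{x}}^{\star} - \bm{1}_{m} \otimes \bar{\mathbf{x}}\| \le \frac{5}{\mu}\sqrt{\alpha}$ from~\eqref{eq: consensus error at xstar}. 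This is the asserted inequality, with $\bar{\alpha}(\delta)$ the minimum of the finitely many thresholds collected above (depending only on $\alpha^{\prime}, \delta, \mu, \eta, m, L_{F}^{g}$).

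The main obstacle I anticipate is the identification $\hat{\mathbf{x}}^{\star} = \hat{\mathbf{x}}^{\prime}$ — arguing that the local minimizer of $Q_{\alpha}$ one starts from is precisely the robustly strongly convex stationary point delivered by Assumption~\ref{ass: Local convexity} — together with the bookkeeping of choosing a single $\bar{\alpha}(\delta)$ small enough simultaneously for $\alpha \le \alpha^{\prime}$, for the consensus error to stay below $\eta$, for the transferred strong-convexity ball about $\bar{\mathbf{x}}$ to keep radius comparable to $\delta$, and for $\|\nabla f(\bar{\mathbf{x}})\|$ to remain below $\mu$ times that radius. The remaining ingredients — the gradient identity $\sum_{i} \nabla f_{i}(\hat{\mathbf{x}}_{i}^{\star}) = 0$, the Hessian comparison via Lemma~\ref{lem: Minimum eigenvalue of Q}, the ``small gradient in a strongly convex region gives a nearby minimizer'' fact, and the final triangle inequality — are routine.
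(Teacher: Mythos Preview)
Your argument is correct and takes a genuinely different route from the paper's proof. The paper applies Assumption~\ref{ass: Local convexity} at the \emph{consensual} point $\bm{1}_{m}\otimes\operatorname{av}(\hat{\mathbf{x}}^{\star})$ rather than at $\hat{\mathbf{x}}^{\star}$, then invokes a separate compactness lemma (relying on the strict-saddle property and coercivity of $\|\nabla f\|$) to conclude that $\operatorname{av}(\hat{\mathbf{x}}^{\star})$ is $\delta/\sqrt{m}$-close to some $\mathbf{x}^{\star}\in\mathcal{X}_{f}^{\star}$; only afterwards does it use the strong convexity of $F$ near $\hat{\mathbf{x}}^{\prime}$ to turn this into a quantitative bound $\|\mathbf{x}^{\star}-\operatorname{av}(\hat{\mathbf{x}}^{\star})\|\le \frac{L_{F}^{g}}{\sqrt{m}\mu}\cdot\frac{5}{\mu}\sqrt{\alpha}$. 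By contrast, you apply Assumption~\ref{ass: Local convexity} directly at $\hat{\mathbf{x}}^{\star}$, identify $\hat{\mathbf{x}}^{\star}=\hat{\mathbf{x}}^{\prime}$ via strong monotonicity of $\nabla Q_{\alpha}$, and then \emph{construct} the nearby minimizer of $f$ from scratch via the ``small gradient in a strongly convex ball'' argument. Your approach is more elementary: it dispenses with the compactness step entirely and, as a byproduct, does not actually need Assumption~\ref{ass: Strict saddle} or the coercivity of $\|\nabla f\|$ from Assumption~\ref{ass: Coercivity}. The paper's route, on the other hand, avoids the identification $\hat{\mathbf{x}}^{\star}=\hat{\mathbf{x}}^{\prime}$ (which you correctly flag as the main thing to verify) and yields a marginally sharper constant $(1+\tfrac{L_{F}^{g}}{\sqrt{m}\mu})$ in place of your $(1+\tfrac{\sqrt{m}L_{F}^{g}}{\mu})$, since its strong-convexity step uses modulus $m\mu$ rather than $\mu$; you could recover this by noting, as you implicitly do, that $\nabla^{2}f\succeq m\mu\,\mathbf{I}_{n}$ on your ball.
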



The essence of Lemma \ref{lem: Local minimizers of Q} is that under the stated regularity conditions, and with sufficiently small $\alpha$, each agent-wise component of a local minimizer of $Q_{\alpha}$ that exhibits small consensus error, also lies close to a local minimizer of $f$. Later, we will prove the set in \eqref{eq: consensus error at xstar} is not empty.

\section{Main Results}
\label{sec: Main Results}
The two main theorems of the paper are presented in this section. Proofs are provided for these via two additional main lemmas. Theorem \ref{the: Approximate second order} concerns the iterates of \eqref{eq: NDGD} and the probability of reaching a neighborhood of a consensual second-order stationary point under correspondingly parametrized step size, noise variance, and consensus error. In Theorem \ref{the: Second order guarantee},  probabilistic convergence guarantees are provided for sufficiently small fixed step-size and noise variance, under the additional local regularity hypothesis formulated in Assumption~\ref{ass: Local convexity}. The complete proof is provided in \cite[Section~IV-B]{qin2025convergence}. 
\color{blue}
The parametrized step-size and noise variance are selected offline using global problem and network information and are not computed by individual agents during runtime. The online implementation itself remains distributed and requires only local communication between neighboring agents.
\color{black}


\begin{lemma}
\label{lem: Consensus error}
    Let Assumptions \ref{ass: Lipschitz}, \ref{ass: Bounded gradient disagreement}, \ref{ass: Network}, \ref{ass: Mixing matrix} and \ref{ass: Random perturbation} hold. For any $k > 0$, there exists $\rho_{1} \ge 1$ such that for $\rho \ge \rho_{1}$, with step-size
    \begin{gather}
    \label{eq: Alpha}
        \begin{gathered}
             \alpha = \frac{\lambda_{\min}(\mathbf{W})}{L_{F}^{g}} \cdot \sqrt{\frac{1}{\rho}},
        \end{gathered}
    \end{gather}
    and noise variance
    \begin{gather}
    \label{eq: Sigma}
        \begin{gathered}
            \sigma = \frac{1}{40\sqrt{mn}L_{F}^{H}} \cdot \alpha \cdot \left(\frac{1}{\rho}\right)^{3},
        \end{gathered}
    \end{gather}
    the sequence $\{\hat{\mathbf{x}}^{k}\}$ generated by \textbf{NDGD} satisfies    
    \begin{multline*}
        \mathbb{P}\bigg[\| \hat{\mathbf{x}}^{k} - \bm{1}_{m} \otimes \mathrm{av}(\hat{\mathbf{x}}^{k})\|\\
        \le (\lambda_{2} + \alpha L_{F}^{g})^{k} \|\hat{\mathbf{x}}^{0} - \bm{1}_{m} \otimes \mathrm{av}(\hat{\mathbf{x}}^{0})\|
        + \zeta \bigg]\ge 1-t\mathrm{e}^{-\rho}
    \end{multline*}
    with $\operatorname{av}(\cdot)$ as per~\eqref{eq: Average of xhat}, where
    \begin{gather}
    \label{eq: Consensus error}
        \zeta = \frac{\alpha D + \alpha \sigma (\sqrt{2\rho} + \sqrt{mn})}{1-\lambda_{2} - \alpha L_{F}^{g}},
    \end{gather}
    and $0 < \lambda_{2} < 1$ denotes the second-largest magnitude eigenvalue of $\mathbf{W}$; i.e., 
    \begin{gather}
    \label{eq: lambda_2}
        \lambda_2 = \max \{ |\lambda| ~:~ \lambda\neq 1 ~ \land ~(\exists \mathbf{v}) \mathbf{W}\mathbf{v}=\lambda \mathbf{v}\}.
    \end{gather}
\end{lemma}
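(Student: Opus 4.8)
The plan is to derive a recursive inequality for the consensus error $\mathbf{e}^{k} := \hat{\mathbf{x}}^{k} - \bm{1}_{m}\otimes\operatorname{av}(\hat{\mathbf{x}}^{k})$ and then unroll it, controlling the stochastic contribution from the noise terms $\{\mathbf{n}^{j}\}$ with the Gaussian concentration inequality of Proposition~\ref{pro: chi-square concentration}. First I would note that $\operatorname{av}(\cdot)$ is the orthogonal projection onto $\operatorname{span}\{\bm{1}_{m}\}\otimes\mathbb{R}^{n}$ (using Remark~\ref{rem: Mixing matrix}, so $\mathbf{W}$ is doubly stochastic and $\bm{1}_{m}^{\top}\mathbf{W}=\bm{1}_{m}^{\top}$), hence the averaging operator commutes with $\hat{\mathbf{W}} = \mathbf{W}\otimes\mathbf{I}_{n}$. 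Applying $\mathbf{I}_{mn} - \frac{1}{m}(\bm{1}_{m}\bm{1}_{m}^{\top})\otimes\mathbf{I}_{n}$ to the lifted iteration~\eqref{eq: NDGD} (equivalently~\eqref{eq: NDGD2}) yields
\begin{align*}
    \mathbf{e}^{k+1} = \hat{\mathbf{W}}_{\perp}\,\mathbf{e}^{k} - \alpha\,\Pi_{\perp}\!\big(\nabla F(\hat{\mathbf{x}}^{k}) + \mathbf{n}^{k}\big),
\end{align*}
where $\Pi_{\perp}$ is the projection onto the consensus-orthogonal subspace and $\hat{\mathbf{W}}_{\perp} = \hat{\mathbf{W}}\Pi_{\perp}$ has spectral norm $\lambda_{2} < 1$ by Assumption~\ref{ass: Mixing matrix}~iii)--iv) and the definition~\eqref{eq: lambda_2}. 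Taking norms and using $\|\Pi_{\perp}\nabla F(\hat{\mathbf{x}}^{k})\| \le \|\Pi_{\perp}\nabla F(\bm{1}_{m}\otimes\operatorname{av}(\hat{\mathbf{x}}^{k}))\| + L_{F}^{g}\|\mathbf{e}^{k}\|$, together with the key observation that $\Pi_{\perp}$ annihilates the consensual part so the first term reduces to the pairwise gradient-disagreement bound $D$ of Assumption~\ref{ass: Bounded gradient disagreement} (each block $\nabla f_{i}(\operatorname{av}) - \frac{1}{m}\sum_{j}\nabla f_{j}(\operatorname{av})$ has norm $\le D$, so the stacked vector has norm $\le \sqrt{m}\,D$ — or more carefully $\le D$ depending on the exact normalization the authors adopt), gives the deterministic recursion
\begin{align*}
    \|\mathbf{e}^{k+1}\| \le (\lambda_{2} + \alpha L_{F}^{g})\,\|\mathbf{e}^{k}\| + \alpha D + \alpha\|\Pi_{\perp}\mathbf{n}^{k}\|.
\end{align*}

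Next I would unroll this over $j = 0, \dots, k-1$ to obtain $\|\mathbf{e}^{k}\| \le (\lambda_{2}+\alpha L_{F}^{g})^{k}\|\mathbf{e}^{0}\| + \sum_{j=0}^{k-1}(\lambda_{2}+\alpha L_{F}^{g})^{k-1-j}(\alpha D + \alpha\|\Pi_{\perp}\mathbf{n}^{j}\|)$; summing the geometric series with ratio $\lambda_{2}+\alpha L_{F}^{g}$ bounds the deterministic $\alpha D$ part by $\alpha D/(1-\lambda_{2}-\alpha L_{F}^{g})$, which is the first summand of $\zeta$ in~\eqref{eq: Consensus error}. For this to be valid I need $\lambda_{2} + \alpha L_{F}^{g} < 1$; with $\alpha$ as in~\eqref{eq: Alpha} this amounts to $\lambda_{\min}(\mathbf{W})/\sqrt{\rho} < 1-\lambda_{2}$, which holds once $\rho$ exceeds a threshold $\rho_{1}\ge 1$ (since $\lambda_{\min}(\mathbf{W}) \le 1$ and $1-\lambda_{2}>0$) — this is where the hypothesis $\rho\ge\rho_{1}$ enters. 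For the stochastic part, since $\mathbf{n}^{j}\sim\mathcal{N}(\bm{0},\sigma^{2}\mathbf{I}_{mn})$ and $\Pi_{\perp}$ is an orthogonal projection of rank $(m-1)n \le mn$, the vector $\|\Pi_{\perp}\mathbf{n}^{j}\|^{2}/\sigma^{2}$ is $\chi^{2}$ with at most $mn$ degrees of freedom; applying the first tail bound in~\eqref{eq: chi-square concentration} with $x = \rho$ gives $\|\Pi_{\perp}\mathbf{n}^{j}\| \le \sigma\sqrt{mn + 2\sqrt{mn\rho} + 2\rho} \le \sigma(\sqrt{mn} + \sqrt{2\rho})$ (using $\sqrt{a+b+c}\le\sqrt{a}+\sqrt{b}+\sqrt{c}$ and $\sqrt{2\sqrt{mn\rho}} = \sqrt[4]{mn}\sqrt[4]{\rho}\cdot\sqrt{2}$ — here I would absorb the middle term into the constant, or more cleanly bound $\|\mathbf{n}^{j}\|$ directly) with probability at least $1 - e^{-\rho}$ for each fixed $j$. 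A union bound (Proposition~\ref{pro: Boole-frechet Inequality}) over the $t$ indices $j = 0,\dots,t-1$ shows that with probability at least $1 - t\,e^{-\rho}$ every $\|\Pi_{\perp}\mathbf{n}^{j}\| \le \sigma(\sqrt{2\rho}+\sqrt{mn})$ simultaneously; on this event the geometric sum of the noise contributions is bounded by $\alpha\sigma(\sqrt{2\rho}+\sqrt{mn})/(1-\lambda_{2}-\alpha L_{F}^{g})$, the second summand of $\zeta$, completing the bound.

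The main obstacle I anticipate is handling the $\|\mathbf{e}^{0}\|$ term correctly: at $k=0$ the recursion contracts $\|\mathbf{e}^{0}\|$ by exactly $(\lambda_{2}+\alpha L_{F}^{g})^{k}$, but one must be careful that the noise indexing starts at $k>0$ per Assumption~\ref{ass: Random perturbation} (so $\mathbf{n}^{0}$ may or may not be present — the statement's union bound over $t$ terms suggests the relevant noise indices are $1,\dots,t$ or $0,\dots,t-1$, and I would match the counting to get exactly $t e^{-\rho}$). A secondary technical point is that the specific choice~\eqref{eq: Sigma} of $\sigma$ in terms of $\alpha$ and $\rho$ does not actually appear in the statement of Lemma~\ref{lem: Consensus error} beyond fixing the relation; so for this lemma it suffices to treat $\sigma$ as given and plug it in only where needed — the detailed role of~\eqref{eq: Sigma} is deferred to the saddle-escape argument. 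Everything else (the commutation of $\operatorname{av}$ with $\hat{\mathbf{W}}$, the Lipschitz splitting of $\nabla F$, the geometric series, and the chi-square tail) is routine given the propositions already stated.
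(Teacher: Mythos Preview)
Your proposal is correct and follows essentially the same route as the paper: derive the one-step recursion $\|\mathbf{e}^{k+1}\|\le(\lambda_{2}+\alpha L_{F}^{g})\|\mathbf{e}^{k}\|+\alpha D+\alpha\|\mathbf{n}^{k}\|$ via the projection/commutation argument and the Lipschitz/disagreement split, control each $\|\mathbf{n}^{k}\|$ by the chi-square tail of Proposition~\ref{pro: chi-square concentration}, union-bound over $k$ indices, and unroll the geometric series using $\lambda_{2}+\alpha L_{F}^{g}<1$ for $\rho\ge\rho_{1}$. The only cosmetic differences are that the paper bounds $\|\mathbf{n}^{k}\|$ rather than $\|\Pi_{\perp}\mathbf{n}^{k}\|$ (a looser but harmless choice) and obtains the bare $D$ constant via the specific intermediate term $\mathbf{G}^{k}=[(\nabla f(\hat{\mathbf{x}}_{1}^{k}))^{\top},\dots,(\nabla f(\hat{\mathbf{x}}_{m}^{k}))^{\top}]^{\top}$, which resolves your $\sqrt{m}D$ versus $D$ hesitation.
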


Building on Lemma \ref{lem: Consensus error}, Lemma \ref{lem: Approximate second order} establishes the attainment of an approximate second-order stationary point of $Q_{\alpha}$ in \eqref{eq: Q} under small consensus errors. 

\begin{lemma}
\label{lem: Approximate second order}
    Let Assumptions \ref{ass: Lipschitz}, \ref{ass: Coercivity}, \ref{ass: Bounded gradient disagreement}, \ref{ass: Network}, \ref{ass: Mixing matrix} and \ref{ass: Random perturbation} hold with corresponding constants $L_{f_{i}}^{g},~L_{f_{i}}^{H},~D > 0$. Further, let $f_{i}^{\star}$ denote the global minimum of function $f_{i}$ for $i\in\mathcal{V}$, and let $\operatorname{av}(\cdot)$ be the averaging operator in~\eqref{eq: Average of xhat}. Then, there exists $\rho_{2} \ge 1$ such that for $\rho \ge \rho_{2}$, with $\alpha$ as per \eqref{eq: Alpha}, $\sigma$ as per \eqref{eq: Sigma}, 
    $K$ as per \eqref{eq: Iteration}, and $\zeta$ as per \eqref{eq: Consensus error}, after
    \begin{gather}
    \label{eq: Iteration}
        K = \lceil (Q_{\alpha} (\hat{\mathbf{x}}^{0}) - \sum_{i=1}^{m} f_{i}^{\star})\cdot \alpha^{-4}\cdot \rho^{5} \rceil
    \end{gather}
    iterations of \eqref{eq: NDGD} from any $\hat{\mathbf{x}}^{0}$ satisfying $\hat{\mathbf{x}}^{0} = \bm{1}_m\otimes\operatorname{av}(\hat{\mathbf{x}}^0)$,
    \begin{multline}
    \label{eq: Approximate second order}
        \mathbb{P} \Bigg[\exists k \in (0,K],\quad
        \|\nabla Q_{\alpha}(\hat{\mathbf{x}}^{k})\| \le \sqrt{\alpha}\\
        \land~~
        \lambda_{\min} (\nabla^{2} Q_{\alpha}(\hat{\mathbf{x}}^{k})) \ge - \sqrt(L_{F}^{H} \sqrt{\alpha})\\
        \land~~\|\hat{\mathbf{x}}^{k} - \bm{1}_{m} \otimes \operatorname{av}(\hat{\mathbf{x}}^{k})\| \le \zeta\Bigg] \ge 1 - 4K^{2}\mathrm{e}^{-\rho},
    \end{multline}
    where $0 < \lambda_{2} < 1$ denotes the second-largest magnitude eigenvalue of $\mathbf{W}$ as per \eqref{eq: lambda_2}.
\end{lemma}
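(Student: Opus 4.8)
The plan is to treat \eqref{eq: NDGD2} as noisy gradient descent on the auxiliary function $Q_\alpha$ and to transplant the classical ``escaping saddle points'' argument (in the style of \cite{ge2015escaping,jin2021nonconvex}) into the lifted coordinates, then combine it with the consensus bound of Lemma~\ref{lem: Consensus error} via the Boole--Fr\'echet inequality (Proposition~\ref{pro: Boole-frechet Inequality}). First I would record the relevant smoothness constants for $Q_\alpha$: from the earlier remarks, $L_{Q_\alpha}^g = L_F^g + \alpha^{-1}(1-\lambda_{\min}(\mathbf{W}))$ and $L_{Q_\alpha}^H = L_F^H$, and with the scaling \eqref{eq: Alpha} one has $\alpha L_{Q_\alpha}^g = \Theta(1)$, so the step size is admissible for a descent lemma on $Q_\alpha$. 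I would then split the event whose probability is being lower-bounded into two regimes tracked over the $K$ iterations: (a) iterations where the gradient is large, $\|\nabla Q_\alpha(\hat{\mathbf{x}}^k)\| > \sqrt{\alpha}$, and (b) iterations where the gradient is small but the Hessian has an eigenvalue below $-\sqrt{L_F^H\sqrt{\alpha}}$.

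For regime (a), the descent lemma applied to \eqref{eq: NDGD2} gives, in conditional expectation, a per-step decrease of $Q_\alpha$ of order $\alpha\|\nabla Q_\alpha(\hat{\mathbf{x}}^k)\|^2 - O(\alpha^2 L_{Q_\alpha}^g)\,\mathbb{E}\|\mathbf{n}^k\|^2$; using $\|\nabla Q_\alpha\|^2 > \alpha$ and the variance choice \eqref{eq: Sigma} (which makes the noise contribution negligible relative to $\alpha^2$), each such iteration decreases $Q_\alpha$ by at least $\Omega(\alpha^2)$ in expectation. For regime (b), the standard saddle-escape estimate shows that starting from a point with a sufficiently negative Hessian eigenvalue, after a burn-in window of $O(\alpha^{-1}\log(1/\alpha))$ iterations the injected Gaussian noise, of variance $\sigma^2$ per coordinate, is amplified along the negative curvature direction and yields an expected decrease of $Q_\alpha$ that is again polynomially large in $\alpha$ (here the threshold $-\sqrt{L_F^H\sqrt{\alpha}}$ and the exponent $\rho^5$ in \eqref{eq: Iteration} are calibrated precisely so the per-escape drop is $\Omega(\alpha^{2})$ or so). Since $Q_\alpha$ is bounded below by $\sum_i f_i^\star$ (coercivity, Assumption~\ref{ass: Coercivity}) and $Q_\alpha(\hat{\mathbf{x}}^0)-\sum_i f_i^\star$ is the total budget, the number of ``bad'' iterations (either large-gradient or at-a-saddle) cannot exceed $K$ as chosen in \eqref{eq: Iteration}; hence at least one index $k\in(0,K]$ must satisfy both the small-gradient and the almost-positive-curvature conditions. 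The failure probability of each concentration step is $O(\mathrm{e}^{-\rho})$ via Proposition~\ref{pro: Supermartingale concentration} (for the martingale-type deviation of the $Q_\alpha$ decrease) and Proposition~\ref{pro: chi-square concentration} (for controlling $\|\mathbf{n}^k\|$ and the noise projections), and a union bound over the $O(K^2)$ relevant events gives the stated $1-4K^2\mathrm{e}^{-\rho}$.

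Finally, I would intersect this event with the consensus event of Lemma~\ref{lem: Consensus error}: since $\hat{\mathbf{x}}^0$ is consensual, the geometric term in that bound vanishes, leaving $\|\hat{\mathbf{x}}^k - \bm{1}_m\otimes\operatorname{av}(\hat{\mathbf{x}}^k)\|\le\zeta$ with probability $\ge 1-t\mathrm{e}^{-\rho}$ for each $k$; folding this into the union bound (and absorbing the count into the $4K^2$ factor, taking $\rho_2$ large enough that all ``sufficiently small $\alpha$'' requirements from the descent and escape arguments hold) yields \eqref{eq: Approximate second order}. The main obstacle I anticipate is the saddle-escape step in regime (b): making the negative-curvature amplification argument rigorous as a supermartingale statement in the lifted coordinates — identifying the correct ``coupling'' or width parameter, showing the iterates do not leave the region where the quadratic Taylor approximation of $Q_\alpha$ is valid (using $L_{Q_\alpha}^H=L_F^H$ and Assumption~\ref{ass: Coercivity} to confine the iterates to a compact sublevel set), and checking that the specific polynomial dependence on $\rho$ in \eqref{eq: Sigma} and \eqref{eq: Iteration} is exactly what the concentration bounds of Proposition~\ref{pro: Supermartingale concentration} demand — is where essentially all the technical work lies; the large-gradient regime and the consensus combination are comparatively routine.
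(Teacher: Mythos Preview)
Your proposal is correct and follows essentially the same architecture as the paper: the two-regime split (large gradient vs.\ saddle), the descent lemma with $\chi^2$-concentration for regime (a), the coupling-based negative-curvature amplification for regime (b), a budget/contradiction argument against the lower bound $Q_\alpha \ge \sum_i f_i^\star$, and the final intersection with Lemma~\ref{lem: Consensus error} via a union bound. The only minor discrepancy is that the paper's saddle-escape window yields decrease with probability at least $1/3$ (not high probability) per window, and the contradiction is closed in expectation; also, Proposition~\ref{pro: Supermartingale concentration} is actually reserved for Lemma~\ref{lem: Stay close to minimizers}, whereas the saddle escape here uses only Gaussian and $\chi^2$ tail bounds together with the coupling-sequence decomposition.
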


\begin{remark}
    $Q_{\alpha} (\hat{\mathbf{x}}^{0})$ in $K$ does not depend on $\alpha$ given $\hat{\mathbf{x}}^{0} = \bm{1}_m\otimes\operatorname{av}(\hat{\mathbf{x}}^0)$.
\end{remark}

Next, Theorem \ref{the: Approximate second order}, which follows from Lemma \ref{lem: Approximate second order}, guarantees that an approximate second-order stationary point of $F$ in \eqref{eq: Constrained optimization problem} is attained under the same conditions. \color{blue} Theorem \ref{the: Approximate second order} establishes high-probability convergence to a neighborhood of a consensual approximate second-order stationary point, characterizing the saddle-point escape behavior of the proposed \textbf{NDGD} algorithm. Building on this result, Theorem \ref{the: Second order guarantee} further establishes convergence guarantees to a local minimizer under the additional local regularity condition introduced in Assumption \ref{ass: Local convexity}.
\color{black}

\begin{theorem}
\label{the: Approximate second order}
    Let Assumptions \ref{ass: Lipschitz}, \ref{ass: Coercivity}, \ref{ass: Bounded gradient disagreement}, \ref{ass: Network}, \ref{ass: Mixing matrix} and \ref{ass: Random perturbation} hold with corresponding constants $L_{f_{i}}^{g},~L_{f_{i}}^{H},~D > 0$. Further, let $f_{i}^{\star}$ denote the global minimum of function $f_{i}$ for $i\in\mathcal{V}$, and let $\operatorname{av}(\cdot)$ be the averaging operator in~\eqref{eq: Average of xhat}. Then, given confidence parameter $0 < p < 1$, there exists $\rho_{3} \ge 1$ such that for $\rho \ge \max\{-2\ln(p),\rho_{3}\}$, with $\alpha$ as per \eqref{eq: Alpha}, $\sigma$ as per \eqref{eq: Sigma}, 
    $K$ as per \eqref{eq: Iteration}, and $\zeta$ as per \eqref{eq: Consensus error},  the following holds:
    \begin{multline}
    \label{eq: Approximate second order 2}
        \mathbb{P} \Bigg[\exists k \in (0,K],\quad
        \|\sum_{i=1}^{m} \nabla f_{i}(\hat{\mathbf{x}}_{i}^{k})\| \le \sqrt{m\alpha}\\
        \land~~
        \lambda_{\min} (\sum_{i=1}^{m} \nabla^{2} f_{i}(\hat{\mathbf{x}}_{i}^{k})) \ge -m \sqrt(L_{F}^{H} \sqrt{\alpha}),\\
        \land~~\|\hat{\mathbf{x}}^{k} - \bm{1}_{m} \otimes \operatorname{av}(\hat{\mathbf{x}}^{k})\| \le \zeta\Bigg] \ge 1 - p,
    \end{multline}
    where $\hat{\mathbf{x}}^{k} = [(\hat{\mathbf{x}}_{1}^{k})^{\top},\cdots,(\hat{\mathbf{x}}_{m}^{k})^{\top}]^{\top}$ according to~\eqref{eq: NDGD} for $k \in \mathbb{N}$, given any initial conditions satisfying $\hat{\mathbf{x}}^{0} = \bm{1}_m\otimes\operatorname{av}(\hat{\mathbf{x}}^0)$.
\end{theorem}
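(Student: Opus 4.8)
The plan is to deduce Theorem~\ref{the: Approximate second order} from Lemma~\ref{lem: Approximate second order} in two stages: first a deterministic translation of the approximate second-order stationarity conditions on $Q_{\alpha}$ appearing in~\eqref{eq: Approximate second order} into the corresponding conditions on $F$ appearing in~\eqref{eq: Approximate second order 2}, and then a probabilistic bookkeeping step that absorbs the failure probability $4K^{2}\mathrm{e}^{-\rho}$ into the prescribed confidence level $p$ by taking $\rho$ large.

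First I would carry out the deterministic translation, which holds at every iterate $\hat{\mathbf{x}}^{k}$ and therefore converts one event into the other pointwise. For the gradient clause, recall $\nabla Q_{\alpha}(\hat{\mathbf{x}}) = \nabla F(\hat{\mathbf{x}}) + \alpha^{-1}(\mathbf{I}_{mn} - \hat{\mathbf{W}})\hat{\mathbf{x}}$; since $\mathbf{W}\bm{1}_{m} = \bm{1}_{m}$ by Assumption~\ref{ass: Mixing matrix}(iii) and $\mathbf{W} = \mathbf{W}^{\top}$, we get $(\bm{1}_{m}\otimes\mathbf{I}_{n})^{\top}(\mathbf{I}_{mn} - \hat{\mathbf{W}}) = 0$, hence $\sum_{i=1}^{m}\nabla f_{i}(\hat{\mathbf{x}}_{i}) = (\bm{1}_{m}\otimes\mathbf{I}_{n})^{\top}\nabla F(\hat{\mathbf{x}}) = (\bm{1}_{m}\otimes\mathbf{I}_{n})^{\top}\nabla Q_{\alpha}(\hat{\mathbf{x}})$, and since $\|\bm{1}_{m}\otimes\mathbf{I}_{n}\| = \sqrt{m}$ this gives $\|\sum_{i=1}^{m}\nabla f_{i}(\hat{\mathbf{x}}_{i})\| \le \sqrt{m}\,\|\nabla Q_{\alpha}(\hat{\mathbf{x}})\|$, so $\|\nabla Q_{\alpha}(\hat{\mathbf{x}}^{k})\| \le \sqrt{\alpha}$ forces $\|\sum_{i}\nabla f_{i}(\hat{\mathbf{x}}_{i}^{k})\| \le \sqrt{m\alpha}$. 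For the Hessian clause, Lemma~\ref{lem: Minimum eigenvalue of Q} gives $\lambda_{\min}(\nabla^{2}Q_{\alpha}(\hat{\mathbf{x}})) \le \frac{1}{m}\lambda_{\min}(\sum_{i}\nabla^{2}f_{i}(\hat{\mathbf{x}}_{i}))$, so $\lambda_{\min}(\nabla^{2}Q_{\alpha}(\hat{\mathbf{x}}^{k})) \ge -\sqrt{L_{F}^{H}\sqrt{\alpha}}$ implies $\lambda_{\min}(\sum_{i}\nabla^{2}f_{i}(\hat{\mathbf{x}}_{i}^{k})) \ge -m\sqrt{L_{F}^{H}\sqrt{\alpha}}$. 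The consensus-error clause $\|\hat{\mathbf{x}}^{k} - \bm{1}_{m}\otimes\operatorname{av}(\hat{\mathbf{x}}^{k})\| \le \zeta$ is identical in both events. Thus the event in~\eqref{eq: Approximate second order} is contained in the event in~\eqref{eq: Approximate second order 2}, so the latter has probability at least $1 - 4K^{2}\mathrm{e}^{-\rho}$ for all $\rho \ge \rho_{2}$, where $\rho_{2}$ is from Lemma~\ref{lem: Approximate second order}.

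It then remains to choose $\rho$ so that $4K^{2}\mathrm{e}^{-\rho} \le p$. From~\eqref{eq: Iteration} and~\eqref{eq: Alpha}, using that $Q_{\alpha}(\hat{\mathbf{x}}^{0})$ does not depend on $\alpha$ when $\hat{\mathbf{x}}^{0} = \bm{1}_{m}\otimes\operatorname{av}(\hat{\mathbf{x}}^{0})$, we have $\alpha^{-4} = (L_{F}^{g}/\lambda_{\min}(\mathbf{W}))^{4}\rho^{2}$, so $K = \lceil c\,\rho^{7}\rceil$ for a constant $c>0$ depending only on the problem data; consequently $4K^{2}\mathrm{e}^{-\rho}$ is a fixed polynomial in $\rho$ times $\mathrm{e}^{-\rho}$, which tends to $0$ as $\rho\to\infty$. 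I would therefore fix $\rho_{3} \ge \rho_{2}$ large enough that $4K^{2}\mathrm{e}^{-\rho/2} \le 1$ for all $\rho \ge \rho_{3}$; then for any $\rho \ge \max\{-2\ln p,\rho_{3}\}$ we obtain $4K^{2}\mathrm{e}^{-\rho} = (4K^{2}\mathrm{e}^{-\rho/2})\mathrm{e}^{-\rho/2} \le \mathrm{e}^{-\rho/2} \le \mathrm{e}^{\ln p} = p$, and combining with the event inclusion from the previous paragraph yields the bound $\ge 1 - 4K^{2}\mathrm{e}^{-\rho} \ge 1 - p$ claimed in~\eqref{eq: Approximate second order 2}.

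The only genuinely delicate point is the uniformity in the last step: because $K$ itself depends on $\rho$ through $\alpha$, one must verify that this dependence is polynomial (of degree $7$) and hence dominated by $\mathrm{e}^{-\rho/2}$, so that a single threshold $\rho_{3}$ suffices for all admissible $\rho$; everything else is a direct substitution of the deterministic identities above into the conclusion of Lemma~\ref{lem: Approximate second order}.
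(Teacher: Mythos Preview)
Your proposal is correct and follows essentially the same route as the paper's proof: invoke Lemma~\ref{lem: Approximate second order}, translate the $Q_{\alpha}$ conditions to the $F$ conditions deterministically via $(\bm{1}_{m}\otimes\mathbf{I}_{n})^{\top}(\mathbf{I}_{mn}-\hat{\mathbf{W}})=0$ and Lemma~\ref{lem: Minimum eigenvalue of Q}, and then choose $\rho_{3}\ge\rho_{2}$ so that $4K^{2}\mathrm{e}^{-\rho}\le\mathrm{e}^{-\rho/2}$, which together with $\rho\ge-2\ln p$ gives $\mathrm{e}^{-\rho/2}\le p$. Your added remark that $K$ depends polynomially (degree~$7$) on $\rho$ is exactly the observation needed to justify the existence of such a $\rho_{3}$, and the paper leaves this implicit.
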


\begin{proof}
    By Lemma \ref{lem: Approximate second order}, there exists $\rho_{2} \ge 1$ so that for $\rho \ge \rho_{2}$, 
    \begin{multline*}
        \mathbb{P} \Big[\exists k \in (0,K],\quad
        \|\nabla Q_{\alpha}(\hat{\mathbf{x}}^{k})\| \le \sqrt{\alpha} \\
        \land~~
        \lambda_{\min} (\nabla^{2} Q_{\alpha}(\hat{\mathbf{x}}^{k})) \ge -\sqrt(L_F^H\sqrt{\alpha})\\
        \land~~\| \hat{\mathbf{x}}^{k} - \bm{1}_{m} \otimes \operatorname{av}(\hat{\mathbf{x}}^{k})\| \le \zeta\Big]
        \ge 1 - 4K^{2}\mathrm{e}^{-\rho}.    
    \end{multline*}
    Further, given $\hat{\mathbf{x}}_{i} \in \mathbb{R}^{n}$,
    \begin{align}
    \label{eq: Upper bound for sum of f}
    \begin{aligned}
        \|\sum_{i=1}^{m} \nabla f_{i}(\hat{\mathbf{x}}_{i})\| &= \|(\bm{1}_{m} \otimes \mathbf{I}_{n})^{\top} \cdot \nabla F(\hat{\mathbf{x}})\|\\
        &= \|(\bm{1}_{m} \otimes \mathbf{I}_{n})^{\top} \cdot \nabla Q_{\alpha}(\hat{\mathbf{x}})\|\\
        &\le \sqrt{m} \|\nabla Q_{\alpha}(\hat{\mathbf{x}})\|,
    \end{aligned}
    \end{align}
    and by Lemma \ref{lem: Minimum eigenvalue of Q},
    \begin{gather}
    \label{eq: Lower bound for Hessian}
        \lambda_{\min} (\sum_{i=1}^{m} \nabla^{2} f_{i}(\hat{\mathbf{x}}_{i})) \ge m \cdot \lambda_{\min} (\nabla^{2} Q_{\alpha}(\hat{\mathbf{x}})).
    \end{gather}
    Thus, there exists $\rho_{3} \ge \rho_{2}$ such that for $\rho \ge \rho_{3}$, $4K^{2}\mathrm{e}^{-\rho} \le \mathrm{e}^{-\rho/2}$. Finally, \eqref{eq: Approximate second order 2} follows from \eqref{eq: Approximate second order}, \eqref{eq: Upper bound for sum of f}, and \eqref{eq: Lower bound for Hessian}.
\end{proof}

\begin{remark}
\label{rem: Theorem 1}
    In view of Theorem~\ref{the: Approximate second order} and Definition~\ref{def: Approximately consensual second order stationary point}, \textbf{NDGD} according to~\eqref{eq: NDGD} eventually produces an $(\eta,\epsilon,\gamma)$-consensual second-order stationary point of $F$ with probability $1-p\in(0,1)$. In particular, $\eta = \zeta$, $\epsilon = \sqrt{m\alpha}$, and $\gamma = \sqrt(L_{F}^{H} \sqrt{\alpha})$, which can be made arbitrarily small by choosing sufficiently large $\rho > 1$, and thus, small $\alpha$ and $\sigma$ as per \eqref{eq: Alpha} and \eqref{eq: Sigma}. To increase this confidence (i.e., make $p$ smaller), one must reduce the step-size $\alpha$ (by increasing $\rho$), which in turn leads to a larger number of iterations $K$. This trade-off implies that achieving higher confidence requires smaller steps, potentially slowing down convergence.
\end{remark}

By the following lemma, once the iterates of \eqref{eq: NDGD} reach a neighborhood of local minimizers of $Q_{\alpha}$, these remain within the neighborhood with high probability provided $\rho$ is sufficiently large. 

\begin{lemma}
\label{lem: Stay close to minimizers}
    Let Assumptions \ref{ass: Lipschitz}, \ref{ass: Coercivity}, \ref{ass: Random perturbation} and \ref{ass: Local convexity} hold with corresponding constants $L_{F}^{g},~L_{F}^{H},~ \alpha^{\prime},~\delta,~\eta,~\mu > 0$. Further, let $f_{i}^{\star}$ denote the global minimum of function $f_{i}$ for $i\in\mathcal{V}$, and let $\operatorname{av}(\cdot)$ be the averaging operator in~\eqref{eq: Average of xhat}. Then, there exists $\rho_{4} \ge 1$  such that for $\rho \ge \rho_{4}$, with $\alpha$ as per \eqref{eq: Alpha}, $\sigma$ as per \eqref{eq: Sigma}, and $K$ as per \eqref{eq: Iteration} the following hold: i) $0 < \alpha \le \alpha^{\prime}$; ii) $\delta \ge \sqrt{\alpha}/\mu$; iii) there exists $\hat{\mathbf{x}}^{\star} \in \mathcal{X}_{Q_{\alpha}}^{\star}$ such that
    \begin{gather}
    \label{eq: Stay close to minimizers}
        \mathbb{P} \Big[\forall k\in (0,K],~\| \hat{\mathbf{x}}^{k} - \hat{\mathbf{x}}^{\star}\| \le \frac{2}{\mu}\sqrt{\alpha} \Big] \ge 1 - K(K+1)\mathrm{e}^{-\rho}.
    \end{gather}
    where $\hat{\mathbf{x}}^{k} = [(\hat{\mathbf{x}}_{1}^{k})^{\top},\cdots,(\hat{\mathbf{x}}_{m}^{k})^{\top}]^{\top}$ according to~\eqref{eq: NDGD} for $k \in \mathbb{N}$, given any initial conditions satisfying $\|\hat{\mathbf{x}}^{0} - \bm{1}_{m} \otimes \operatorname{av}(\hat{\mathbf{x}}^{0})\| \le \eta$, and $\textup{dist} (\hat{\mathbf{x}}^{0}, \mathcal{X}_{Q_{\alpha}}^{\star}) \le \sqrt{\alpha}/\mu$.
\end{lemma}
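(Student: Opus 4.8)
\emph{Plan.} The idea is to reduce the iteration \eqref{eq: NDGD} to its lifted noisy-gradient form \eqref{eq: NDGD2} on $Q_{\alpha}$, extract from Assumption~\ref{ass: Local convexity} a genuine local minimizer $\hat{\mathbf{x}}^{\star}\in\mathcal{X}_{Q_{\alpha}}^{\star}$ near $\hat{\mathbf{x}}^{0}$ that carries a \emph{uniform} strong-convexity neighborhood, and then run a one-step probabilistic induction showing the iterates stay trapped in the ball $\{\hat{\mathbf{x}}:\|\hat{\mathbf{x}}-\hat{\mathbf{x}}^{\star}\|\le\frac{2}{\mu}\sqrt{\alpha}\}$. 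Claims i) and ii) are immediate: from \eqref{eq: Alpha}, $\alpha=\frac{\lambda_{\min}(\mathbf{W})}{L_{F}^{g}}\rho^{-1/2}\to0$ as $\rho\to\infty$, so $0<\alpha\le\alpha'$ and $\sqrt{\alpha}/\mu\le\delta$ once $\rho$ exceeds a threshold, which is folded into $\rho_{4}$.

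\emph{Choice of $\hat{\mathbf{x}}^{\star}$.} I would apply Assumption~\ref{ass: Local convexity} at $\hat{\mathbf{x}}^{0}$, which is admissible since $\|\hat{\mathbf{x}}^{0}-\bm{1}_{m}\otimes\operatorname{av}(\hat{\mathbf{x}}^{0})\|\le\eta$ by hypothesis. Let $\hat{\mathbf{y}}\in\mathcal{X}_{Q_{\alpha}}^{\star}$ essentially realize $\operatorname{dist}(\hat{\mathbf{x}}^{0},\mathcal{X}_{Q_{\alpha}}^{\star})\le\sqrt{\alpha}/\mu$. Because $\nabla Q_{\alpha}(\hat{\mathbf{y}})=0$ forces $\sum_{i}\nabla f_{i}(\hat{\mathbf{y}}_{i})=(\bm{1}_{m}\otimes\mathbf{I}_{n})^{\top}\nabla F(\hat{\mathbf{y}})=0$ and, by Lemma~\ref{lem: Minimum eigenvalue of Q}, $\lambda_{\min}(\sum_{i}\nabla^{2}f_{i}(\hat{\mathbf{y}}_{i}))\ge m\,\lambda_{\min}(\nabla^{2}Q_{\alpha}(\hat{\mathbf{y}}))>0$, the gradient- and Hessian-Lipschitz bounds of Assumption~\ref{ass: Lipschitz} propagated over the gap $\sqrt{\alpha}/\mu$ give $\|\sum_{i}\nabla f_{i}(\hat{\mathbf{x}}^{0}_{i})\|\le\sqrt{m}L_{F}^{g}\sqrt{\alpha}/\mu$ and $\lambda_{\min}(\sum_{i}\nabla^{2}f_{i}(\hat{\mathbf{x}}^{0}_{i}))\ge-\sqrt{m}L_{F}^{H}\sqrt{\alpha}/\mu$, both of which beat $\epsilon$ and $-\gamma$ respectively for $\rho$ large. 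Hence alternatives i)–ii) of Assumption~\ref{ass: Local convexity} fail at $\hat{\mathbf{x}}^{0}$, so alternative iii) holds: there is $\hat{\mathbf{x}}^{\star}$ with $\|\hat{\mathbf{x}}^{\star}-\hat{\mathbf{x}}^{0}\|\le\delta$, with $\sum_{j}w_{ij}(\hat{\mathbf{x}}^{\star}_{j}-\hat{\mathbf{x}}^{\star}_{i})=\alpha\nabla f_{i}(\hat{\mathbf{x}}^{\star}_{i})$ for all $i$ — which, since $\sum_{j}w_{ij}=1$, is exactly $\nabla Q_{\alpha}(\hat{\mathbf{x}}^{\star})=0$ in block form — and $\nabla^{2}F\succeq\mu\mathbf{I}$, hence $\nabla^{2}Q_{\alpha}=\nabla^{2}F+\alpha^{-1}(\mathbf{I}_{mn}-\hat{\mathbf{W}})\succeq\mu\mathbf{I}$, throughout $B(\hat{\mathbf{x}}^{\star},3\delta)$. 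In particular $\hat{\mathbf{x}}^{\star}\in\mathcal{X}_{Q_{\alpha}}^{\star}$, and since $Q_{\alpha}$ is $\mu$-strongly convex on $B(\hat{\mathbf{x}}^{\star},3\delta)$ it has a unique critical point there; as $\hat{\mathbf{y}}$ also lies in that ball (using $\sqrt{\alpha}/\mu<\delta$), $\hat{\mathbf{y}}=\hat{\mathbf{x}}^{\star}$, whence $\|\hat{\mathbf{x}}^{0}-\hat{\mathbf{x}}^{\star}\|\le\sqrt{\alpha}/\mu\le\frac{2}{\mu}\sqrt{\alpha}$, which supplies the base case.

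\emph{Contraction, noise, induction.} $Q_{\alpha}$ is globally $L_{Q_{\alpha}}^{g}=L_{F}^{g}+\alpha^{-1}(1-\lambda_{\min}(\mathbf{W}))$-smooth and $\mu$-strongly convex on $B(\hat{\mathbf{x}}^{\star},3\delta)$; for $\rho$ large, $\alpha(L_{Q_{\alpha}}^{g}+\mu)=\alpha L_{F}^{g}+\alpha\mu+1-\lambda_{\min}(\mathbf{W})\le2$, so writing $\nabla Q_{\alpha}(\hat{\mathbf{x}}^{k})-\nabla Q_{\alpha}(\hat{\mathbf{x}}^{\star})=\mathbf{H}_{k}(\hat{\mathbf{x}}^{k}-\hat{\mathbf{x}}^{\star})$ with $\mu\mathbf{I}\preceq\mathbf{H}_{k}\preceq L_{Q_{\alpha}}^{g}\mathbf{I}$ (valid while the segment $[\hat{\mathbf{x}}^{\star},\hat{\mathbf{x}}^{k}]$ lies in the ball) yields $\|\mathbf{I}-\alpha\mathbf{H}_{k}\|\le1-\alpha\mu$. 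Thus from \eqref{eq: NDGD2}, $\|\hat{\mathbf{x}}^{k+1}-\hat{\mathbf{x}}^{\star}\|\le(1-\alpha\mu)\|\hat{\mathbf{x}}^{k}-\hat{\mathbf{x}}^{\star}\|+\alpha\|\mathbf{n}^{k}\|$ whenever $\hat{\mathbf{x}}^{k}\in B(\hat{\mathbf{x}}^{\star},3\delta)$. Applying Proposition~\ref{pro: chi-square concentration} to $\|\mathbf{n}^{k}\|^{2}/\sigma^{2}\sim\chi^{2}_{mn}$ with $x=\rho$, the event $\mathcal{A}_{k}:=\{\|\mathbf{n}^{k}\|\le\sigma(\sqrt{mn}+\sqrt{2\rho})\}$ has probability $\ge1-\mathrm{e}^{-\rho}$, and substituting \eqref{eq: Alpha}–\eqref{eq: Sigma} shows $\sigma(\sqrt{mn}+\sqrt{2\rho})$ is of order $\sqrt{\alpha}\,\rho^{-11/4}$, hence $\le2\sqrt{\alpha}$ for $\rho$ large. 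On $\mathcal{A}_{k}$, if $\|\hat{\mathbf{x}}^{k}-\hat{\mathbf{x}}^{\star}\|\le\frac{2}{\mu}\sqrt{\alpha}$ (which is $\le3\delta$ for $\rho$ large, so the contraction applies), then $\|\hat{\mathbf{x}}^{k+1}-\hat{\mathbf{x}}^{\star}\|\le(1-\alpha\mu)\frac{2}{\mu}\sqrt{\alpha}+2\alpha\sqrt{\alpha}=\frac{2}{\mu}\sqrt{\alpha}$. Combined with the base case and a union bound over the $K$ steps, on $\bigcap_{k}\mathcal{A}_{k}$ one has $\|\hat{\mathbf{x}}^{k}-\hat{\mathbf{x}}^{\star}\|\le\frac{2}{\mu}\sqrt{\alpha}$ for all $k\in(0,K]$, and $\mathbb{P}(\bigcap_{k}\mathcal{A}_{k})\ge1-K\mathrm{e}^{-\rho}\ge1-K(K+1)\mathrm{e}^{-\rho}$, which is \eqref{eq: Stay close to minimizers}; taking $\rho_{4}$ to be the maximum of the finitely many thresholds above completes the proof.

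\emph{Main obstacle.} The delicate part is the second paragraph: one must verify that $\hat{\mathbf{x}}^{0}$ is close enough to $\mathcal{X}_{Q_{\alpha}}^{\star}$ to \emph{exclude} alternatives i)–ii) of Assumption~\ref{ass: Local convexity}, so that alternative iii) — which simultaneously delivers the true minimizer $\hat{\mathbf{x}}^{\star}$ of $Q_{\alpha}$, the identification of the balance equation with $\nabla Q_{\alpha}=0$, and the uniform constant $\mu$ that drives the contraction — must be in force, and then to pin $\|\hat{\mathbf{x}}^{0}-\hat{\mathbf{x}}^{\star}\|$ down to $O(\sqrt{\alpha})$ via uniqueness of critical points inside the strongly convex region. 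Everything else (the affine-contraction estimate, the $\chi^{2}$ tail bound, the union bound, and the consolidation of all ``$\rho$ sufficiently large'' requirements into one $\rho_{4}$) is routine.
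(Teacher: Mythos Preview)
Your argument is correct and in fact takes a cleaner route than the paper's. The paper proceeds by building a supermartingale out of the (rescaled, truncated) \emph{squared} distance $\theta^{t}:=\max\{(1-\alpha\mu)^{-t}(\|\hat{\mathbf{x}}^{t}-\hat{\mathbf{x}}^{\star}\|^{2}-mn\alpha\sigma^{2}/\mu),\,0\}$, then bounds its conditional variance and increment tails, and applies a bespoke ``relaxed Azuma'' inequality (Proposition~\ref{pro: Relaxed Azuma}, developed precisely for this purpose because the Gaussian increments are unbounded). The telescoping of the resulting per-step failure probabilities is what produces the $K(K+1)\mathrm{e}^{-\rho}$ bound. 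By contrast, you work with the \emph{norm} directly, obtain the one-step deterministic contraction $\|\hat{\mathbf{x}}^{k+1}-\hat{\mathbf{x}}^{\star}\|\le(1-\alpha\mu)\|\hat{\mathbf{x}}^{k}-\hat{\mathbf{x}}^{\star}\|+\alpha\|\mathbf{n}^{k}\|$, and then invoke only the $\chi^{2}$ tail bound of Proposition~\ref{pro: chi-square concentration} on each step followed by a union bound. This is more elementary---it avoids Proposition~\ref{pro: Relaxed Azuma} entirely---and actually yields the sharper estimate $1-K\mathrm{e}^{-\rho}$, which you then relax to match \eqref{eq: Stay close to minimizers}. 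Your second paragraph, extracting the uniform modulus $\mu$ on $B(\hat{\mathbf{x}}^{\star},3\delta)$ from alternative iii) of Assumption~\ref{ass: Local convexity} and then identifying $\hat{\mathbf{x}}^{\star}$ with the nearby point in $\mathcal{X}_{Q_{\alpha}}^{\star}$ via uniqueness of critical points in a strongly convex region, is also more explicit than the paper, which essentially takes the existence of such an $\hat{\mathbf{x}}^{\star}$ with the right $\mu$ for granted. (One cosmetic slip: the Hessian perturbation bound at $\hat{\mathbf{x}}^{0}$ should carry the factor $m$ rather than $\sqrt{m}$, since $\|\bm{1}_{m}\otimes\mathbf{I}_{n}\|^{2}=m$; this does not affect the conclusion.) The trade-off is that the paper's supermartingale machinery is more flexible---it would accommodate, e.g., non-Gaussian noise with controlled moments---whereas your argument leans on the exact $\chi^{2}$ tail to make the per-step noise small enough that the ball is invariant with high probability.
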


\begin{remark}
    The inequality~\eqref{eq: Stay close to minimizers} provides a high-probability guarantee that all iterates remain within a bounded neighborhood of the minimizer. As the parameter $\rho$ increases, the error term $K(K+1)\mathrm{e}^{-\rho}$ decays rapidly to zero, causing the probability bound to approach one. This highlights that larger values of $\rho$ lead to greater confidence in maintaining proximity to the solution uniformly over all iterates.
\end{remark}

\color{blue}As the main result, it is established that under suitable regularity conditions, and for sufficiently small step-size $\alpha$ and noise variance, the iterates of all agents converge with high probability to a neighborhood of a common local minimizer of $f$, with explicit bounds on both the optimality gap and the consensus error. \color{black}

\begin{theorem}
\label{the: Second order guarantee}
    Let Assumptions \ref{ass: Strict saddle}, \ref{ass: Lipschitz}, \ref{ass: Coercivity}, \ref{ass: Bounded gradient disagreement}, \ref{ass: Network}, \ref{ass: Mixing matrix}, \ref{ass: Random perturbation} and \ref{ass: Local convexity} hold with corresponding constants $L_{f_{i}}^{g},~L_{f_{i}}^{H},~D,~\mu > 0$. Further, let $f_{i}^{\star}$ denote the global minimum of function $f_{i}$ for $i\in\mathcal{V}$, and let $\operatorname{av}(\cdot)$ be the averaging operator in~\eqref{eq: Average of xhat}. Given confidence parameter $0 < p < 1$, there exists $\rho_{0} \ge 1$ such that for $\rho \ge \rho_{0}$, with $\alpha$ as per \eqref{eq: Alpha}, $\sigma$ as per \eqref{eq: Sigma}, $K$ as per \eqref{eq: Iteration}, and $\zeta$ as per \eqref{eq: Consensus error}, the following hold:
    \begin{multline}
    \label{eq: Theorem 2}
        \mathbb{P} \Bigg[\forall i \in \mathcal{V},~\textup{dist} (\hat{\mathbf{x}}_{i}^{K}, \mathcal{X}_{f}^{\star}) \le \frac{7\mu + 5\sqrt{m}L_{F}^{g}}{\mu^{2}} \sqrt{\alpha}\\
        \land~~\|\hat{\mathbf{x}}_{i}^{K} - \operatorname{av}(\hat{\mathbf{x}}^{K})\| \le \zeta \Bigg] \ge 1 - p,
    \end{multline}
    where $\hat{\mathbf{x}}_i^K$ as the $K$-th iteration of~\eqref{eq: NDGD} given any initial condition $\hat{\mathbf{x}}^{0} = \bm{1}_m\otimes\operatorname{av}(\hat{\mathbf{x}}^0)$.
\end{theorem}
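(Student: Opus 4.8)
The plan is to chain together the three main technical results---Lemma~\ref{lem: Consensus error} (or rather the stationarity statement it feeds), Theorem~\ref{the: Approximate second order}, Lemma~\ref{lem: Stay close to minimizers}, and Lemma~\ref{lem: Local minimizers of Q}---on a single high-probability event, and then intersect with the confidence requirement $\rho \ge -2\ln(p)$ via the Boole--Fr\'echet inequality (Proposition~\ref{pro: Boole-frechet Inequality}). First I would fix $0 < p < 1$ and choose $\rho_{0} \ge \max\{\rho_{2}, \rho_{4}, \bar\rho\}$, where $\rho_{2}$ comes from Lemma~\ref{lem: Approximate second order}, $\rho_{4}$ from Lemma~\ref{lem: Stay close to minimizers}, and $\bar\rho$ is a further threshold to be selected so that (a) $\alpha$ as per~\eqref{eq: Alpha} satisfies $\alpha \le \bar\alpha(\delta)$ from Lemma~\ref{lem: Local minimizers of Q} (this is legitimate since $\alpha \to 0$ as $\rho \to \infty$, and parts i)--ii) of Lemma~\ref{lem: Stay close to minimizers} already give $\alpha \le \alpha'$ and $\delta \ge \sqrt{\alpha}/\mu$), and (b) the failure probabilities below sum to at most $p$. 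Enlarging $\rho_{0}$ further so that $\rho \ge -2\ln p$ makes $\mathrm{e}^{-\rho/2} \le p$.

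The core argument runs as follows. By Lemma~\ref{lem: Approximate second order}, on an event $\mathcal{E}_{1}$ of probability at least $1 - 4K^{2}\mathrm{e}^{-\rho}$, there exists some $k^{\star} \in (0,K]$ at which $\hat{\mathbf{x}}^{k^{\star}}$ is an approximate second-order stationary point of $Q_{\alpha}$ with small consensus error. Under Assumption~\ref{ass: Strict saddle} and the Lipschitz/coercivity hypotheses, this $\hat{\mathbf{x}}^{k^{\star}}$ lies within $\sqrt{\alpha}/\mu$ of $\mathcal{X}_{Q_{\alpha}}^{\star}$ and has consensus error at most $\eta$ --- this is exactly the ``distance to a robust local minimizer of $Q_\alpha$'' mechanism spelled out after Assumption~\ref{ass: Local convexity}, using that the strict-saddle property rules out the approximate-stationary point being near a saddle once $\alpha$ (hence the tolerances $\sqrt{\alpha}$ and $\sqrt{L_F^H\sqrt\alpha}$) is small enough. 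So $\hat{\mathbf{x}}^{k^{\star}}$ meets the initial-condition hypotheses of Lemma~\ref{lem: Stay close to minimizers}. Restarting the iteration index at $k^{\star}$ and applying Lemma~\ref{lem: Stay close to minimizers}, on a further event $\mathcal{E}_{2}$ of probability at least $1 - K(K+1)\mathrm{e}^{-\rho}$ there is $\hat{\mathbf{x}}^{\star} \in \mathcal{X}_{Q_{\alpha}}^{\star}$ with $\|\hat{\mathbf{x}}^{k} - \hat{\mathbf{x}}^{\star}\| \le \tfrac{2}{\mu}\sqrt{\alpha}$ for all $k$ in the remaining window, in particular $\|\hat{\mathbf{x}}^{K} - \hat{\mathbf{x}}^{\star}\| \le \tfrac{2}{\mu}\sqrt{\alpha}$. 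Next, $\hat{\mathbf{x}}^{\star}$ satisfies the hypothesis~\eqref{eq: consensus error at xstar} of Lemma~\ref{lem: Local minimizers of Q}: its consensus error is bounded by the consensus error of $\hat{\mathbf{x}}^{K}$ plus $\tfrac{2}{\mu}\sqrt{\alpha}\cdot\|\mathbf{I}-\tfrac1m\bm1\bm1^\top\|$-type terms, and I would verify this falls under $\tfrac{5}{\mu}\sqrt{\alpha}$ (the constant $5$ in Lemma~\ref{lem: Local minimizers of Q} is presumably chosen exactly to absorb this slack). Lemma~\ref{lem: Local minimizers of Q} then gives $\textup{dist}(\hat{\mathbf{x}}_{i}^{\star}, \mathcal{X}_{f}^{\star}) \le (1 + \tfrac{\sqrt{m}L_{F}^{g}}{\mu})\tfrac{5}{\mu}\sqrt{\alpha}$ for every $i$. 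A triangle inequality, $\textup{dist}(\hat{\mathbf{x}}_{i}^{K},\mathcal{X}_{f}^{\star}) \le \|\hat{\mathbf{x}}_{i}^{K} - \hat{\mathbf{x}}_{i}^{\star}\| + \textup{dist}(\hat{\mathbf{x}}_{i}^{\star},\mathcal{X}_{f}^{\star}) \le \tfrac{2}{\mu}\sqrt{\alpha} + (1 + \tfrac{\sqrt{m}L_{F}^{g}}{\mu})\tfrac{5}{\mu}\sqrt{\alpha} = \tfrac{7\mu + 5\sqrt{m}L_{F}^{g}}{\mu^{2}}\sqrt{\alpha}$, yields the first conjunct of~\eqref{eq: Theorem 2}. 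The second conjunct, $\|\hat{\mathbf{x}}_{i}^{K} - \operatorname{av}(\hat{\mathbf{x}}^{K})\| \le \zeta$, is the per-agent form of the consensus-error bound, obtained from Lemma~\ref{lem: Consensus error} (or directly from the $\|\hat{\mathbf{x}}^{K} - \bm1_m\otimes\operatorname{av}(\hat{\mathbf{x}}^{K})\|\le\zeta$ clause carried along inside Lemma~\ref{lem: Approximate second order}/Lemma~\ref{lem: Stay close to minimizers}), since the per-agent deviation is dominated by the stacked-norm deviation.

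Finally I would assemble the probability bound: on $\mathcal{E}_{1}\cap\mathcal{E}_{2}$ (together with the almost-sure consensus-error event, if counted separately) all the displayed inequalities hold, and by Proposition~\ref{pro: Boole-frechet Inequality} the failure probability is at most $4K^{2}\mathrm{e}^{-\rho} + K(K+1)\mathrm{e}^{-\rho} \le 5K(K+1)\mathrm{e}^{-\rho}$, which for $\rho \ge \rho_{0}$ large enough is $\le \mathrm{e}^{-\rho/2} \le p$, using $K$'s polynomial-in-$\rho$ growth (from~\eqref{eq: Iteration}, $K$ scales like $\rho^{5}\alpha^{-4}\sim\rho^{7}$ up to constants, so $\mathrm{e}^{-\rho}$ dominates $K^{2}$).

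I expect the main obstacle to be the bridging step: cleanly justifying that the approximate second-order stationary point $\hat{\mathbf{x}}^{k^{\star}}$ of $Q_{\alpha}$ delivered by Theorem~\ref{the: Approximate second order}/Lemma~\ref{lem: Approximate second order} actually satisfies the \emph{initial-condition hypotheses} of Lemma~\ref{lem: Stay close to minimizers}, namely $\textup{dist}(\hat{\mathbf{x}}^{k^{\star}}, \mathcal{X}_{Q_{\alpha}}^{\star}) \le \sqrt{\alpha}/\mu$ and consensus error $\le \eta$. This requires invoking Assumption~\ref{ass: Strict saddle} to exclude proximity to saddles at the relevant scale, then using condition iii) of Assumption~\ref{ass: Local convexity} (strong convexity of $F$ near some nearby $\hat{\mathbf{x}}'$ solving the DGD fixed-point relation) to produce an actual local minimizer of $Q_{\alpha}$ within the required $\sqrt{\alpha}/\mu$ ball --- essentially a quantitative inverse-function / Newton-type estimate turning an approximate stationary point into a nearby exact one under a strong-convexity lower bound $\mu$. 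Getting the constants to line up with the $\tfrac{5}{\mu}$ and $\tfrac{2}{\mu}$ factors and confirming this is precisely what Lemma~\ref{lem: Local minimizers of Q} and Lemma~\ref{lem: Stay close to minimizers} were designed to provide (so that Theorem~\ref{the: Second order guarantee} is genuinely just their composition plus bookkeeping) is where the care is needed; everything else is triangle inequalities and a union bound.
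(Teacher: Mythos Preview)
Your proposal is correct and follows essentially the same route as the paper: invoke Lemma~\ref{lem: Approximate second order} to reach an approximate second-order stationary point $\hat{\mathbf{x}}^{k^{\star}}$ of $Q_{\alpha}$ with small consensus error, bridge to $\textup{dist}(\hat{\mathbf{x}}^{k^{\star}},\mathcal{X}_{Q_{\alpha}}^{\star})\le\sqrt{\alpha}/\mu$, apply Lemma~\ref{lem: Stay close to minimizers} to trap $\hat{\mathbf{x}}^{K}$ near some $\hat{\mathbf{x}}^{\star}\in\mathcal{X}_{Q_{\alpha}}^{\star}$, check $\hat{\mathbf{x}}^{\star}$ satisfies~\eqref{eq: consensus error at xstar}, apply Lemma~\ref{lem: Local minimizers of Q}, and finish with a triangle inequality and a union bound. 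The arithmetic you wrote for the final constant $\tfrac{7\mu+5\sqrt{m}L_F^g}{\mu^2}$ matches exactly.

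Two small clarifications on the bridging step you flagged as the main obstacle. First, it is simpler than you anticipate: Assumption~\ref{ass: Local convexity} alone does all the work, with no appeal to the strict-saddle Assumption~\ref{ass: Strict saddle} at this point. Once $\rho$ is large enough that $\sqrt{m\alpha}\le\epsilon$, $m\sqrt{L_F^H\sqrt{\alpha}}\le\gamma$, and $\zeta\le\eta$, conditions i) and ii) of Assumption~\ref{ass: Local convexity} both fail at $\hat{\mathbf{x}}^{k^{\star}}$, so condition iii) \emph{directly hands you} an exact stationary point $\hat{\mathbf{x}}'$ of $Q_{\alpha}$ within $\delta$ of $\hat{\mathbf{x}}^{k^{\star}}$, together with local $\mu$-strong convexity of $F$ (hence of $Q_{\alpha}$, by Lemma~\ref{lem: Minimum eigenvalue of Q}) in a $3\delta$-ball about $\hat{\mathbf{x}}'$. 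No inverse-function or Newton-type argument is needed; the strong-convexity lower bound immediately gives $\|\hat{\mathbf{x}}^{k^{\star}}-\hat{\mathbf{x}}'\|\le\|\nabla Q_{\alpha}(\hat{\mathbf{x}}^{k^{\star}})\|/\mu\le\sqrt{\alpha}/\mu$. Second, Assumption~\ref{ass: Strict saddle} is invoked only inside the proof of Lemma~\ref{lem: Local minimizers of Q} (to pass from approximate to exact second-order stationarity of $f$ at $\operatorname{av}(\hat{\mathbf{x}}^{\star})$), not in the bridging step itself. Finally, note that the consensus bound at the terminal time $K$ is not carried inside Lemma~\ref{lem: Approximate second order} (which only controls consensus at the random $k^{\star}$); the paper invokes Lemma~\ref{lem: Consensus error} separately at $k=K$, contributing an additional $K\mathrm{e}^{-\rho}$ to the failure probability, which is harmless for the final $\mathrm{e}^{-\rho/2}\le p$ estimate.
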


\begin{remark}
    With $\alpha$ as per \eqref{eq: Alpha}, $\sigma$ as per \eqref{eq: Sigma}, and $\zeta$ in~\eqref{eq: Consensus error}, the bounds in \eqref{eq: Theorem 2} are monotonically decreasing in $\rho \ge 1$.
\end{remark}

\begin{remark}
\color{blue}
The bounds in Theorem~\ref{the: Second order guarantee} depend on the network size $m$, problem dimension $n$, and the spectral gap $(1-\lambda_{2})$. In particular, $\zeta$ in \eqref{eq: Consensus error} scales with $\sqrt{mn}$ and inversely with $(1-\lambda_{2})$. For poorly connected networks, the consensus error may become larger and can be partially mitigated by choosing a smaller step-size $\alpha$, at the cost of increased iteration complexity $K$. In contrast, better-connected networks with larger spectral gaps lead to improved consensus behavior and smaller disagreement among agents.
\color{black}
\end{remark}

\begin{remark}
\color{blue}
Theorem~\ref{the: Second order guarantee} reveals a trade-off between accuracy and computational complexity. The accuracy scales as $O(\sqrt{\alpha})$, so smaller $\alpha$ improves precision but increases the required iterations $K$. In contrast, diminishing step-size methods achieve exact convergence asymptotically but are typically slower and lack finite-time saddle escape guarantees. Deterministic methods such as \textbf{EXTRA} achieve exact consensus but require higher communication per iteration. The proposed method balances these aspects via fixed step-size and stochastic perturbations.
\color{black}
\end{remark}

\begin{proof}
    By Assumption \ref{ass: Local convexity}, there exists constants $\alpha^{\prime},~\epsilon,~\gamma,~\eta,~\delta,~\mu > 0$ such that for every $0 < \alpha \le \alpha^{\prime}$ and $K \ge 0$, the event set $\mathcal{A}_{1}(K) = \big\{\exists k \in (0,K],~\|\nabla Q_{\alpha}(\hat{\mathbf{x}}^{k})\| \le \sqrt{\alpha}~
        \land~\| \hat{\mathbf{x}}^{k} - \bm{1}_{m} \otimes \operatorname{av}(\hat{\mathbf{x}}^{k})\| \le \eta ~\land~ \hat{\mathbf{x}}^{k} \in \mathcal{B}_{\alpha}\big\}$
    with $\mathcal{B}_{\alpha} = \{\hat{\mathbf{x}}:~ \exists \hat{\mathbf{x}}^{\prime} \in (\mathbb{R}^{n})^{m},\|\hat{\mathbf{x}}^{\prime} - \hat{\mathbf{x}}\| \le \delta~~\land~~
    \nabla Q_{\alpha} (\hat{\mathbf{x}}^{\prime}) = \bm{0}~~\land~~
    \lambda_{\min}(\nabla^{2} F(\hat{\mathbf{y}})) \ge \mu~\text{for all } \hat{\mathbf{y}} \text{ with } \|\hat{\mathbf{y}} - \hat{\mathbf{x}}^{\prime}\| \le 3\delta\}$
    is a superset of the event set $\mathcal{A}_{2}(K) = \big\{\exists k \in (0,K],~\|\nabla Q_{\alpha}(\hat{\mathbf{x}}^{k})\| \le \sqrt{\alpha}~
        \land~
        \|\sum_{i=1}^{m} \nabla f_{i}(\hat{\mathbf{x}}_{i}^{k})\| \le \epsilon~\land~
        \lambda_{\min} (\sum_{i=1}^{m} \nabla^{2} f_{i}(\hat{\mathbf{x}}_{i}^{k})) \ge -\gamma~
        \land~\|\hat{\mathbf{x}}^{k} - \bm{1}_{m} \otimes \operatorname{av}(\hat{\mathbf{x}}^{k})\| \le \eta\big\}$.
    Then, it follows that with $\alpha$ depending on $\rho$ as per \eqref{eq: Alpha}, there exists $\rho_{5} > 0$ such that for any $\rho \ge \rho_{5}$, the event set $\mathcal{A}_{3}(K) = \big\{\exists k \in (0,K],~\|\nabla Q_{\alpha}(\hat{\mathbf{x}}^{k})\| \le \sqrt{\alpha}~
        \land~
        \|\sum_{i=1}^{m} \nabla f_{i}(\hat{\mathbf{x}}_{i}^{k})\| \le \sqrt{m\alpha}~
        \land~
        \lambda_{\min} (\sum_{i=1}^{m} \nabla^{2} f_{i}(\hat{\mathbf{x}}_{i}^{k})) \ge -m \sqrt(L_{F}^{H} \sqrt{\alpha})~
        \land~\|\hat{\mathbf{x}}^{k} - \bm{1}_{m} \otimes \operatorname{av}(\hat{\mathbf{x}}^{k})\| \le \eta\big\}$
    is a subset of the event set $\mathcal{A}_{2}(K)$. By \eqref{eq: Upper bound for sum of f} and \eqref{eq: Lower bound for Hessian}, the event set $\mathcal{A}_{4}(K) = \big\{\exists k \in (0,K],~\|\nabla Q_{\alpha}(\hat{\mathbf{x}}^{k})\| \le \sqrt{\alpha}~
        \land~\lambda_{\min} (\nabla^{2} Q_{\alpha}(\hat{\mathbf{x}}^{k})) \ge -\sqrt(L_{F}^{H} \sqrt{\alpha})~
        \land~\| \hat{\mathbf{x}}^{k} - \bm{1}_{m} \otimes \operatorname{av}(\hat{\mathbf{x}}^{k})\| \le \eta\big\}$
    is a subset of the event set $\mathcal{A}_{3}(K)$. To summarize,
    \begin{gather*}
        \mathcal{A}_{4}(K) \subset \mathcal{A}_{3}(K) \subset \mathcal{A}_{2}(K) \subset \mathcal{A}_{1}(K).
    \end{gather*}
    By Lemma \ref{lem: Approximate second order}, there exists $\rho_{2} \ge 1$ such that for any $\rho \ge \max\{\rho_{2},\rho_{5}\}$, with $\sigma$ as per \eqref{eq: Sigma} and $K$ as per \eqref{eq: Iteration},
    \begin{gather*}
        \mathbb{P}[\mathcal{A}_{1}(K)] \ge \mathbb{P}[\mathcal{A}_{2}(K)] \ge \mathbb{P}[\mathcal{A}_{3}(K)] \ge \mathbb{P}[\mathcal{A}_{4}(K)] \ge 1 - 4K^{2}\mathrm{e}^{-\rho}.
    \end{gather*}
    Let event set
    \begin{multline*}
        \mathcal{A}_{0}(K) = \bigg\{\exists k \in (0,K],~\textup{dist} (\hat{\mathbf{x}}^{k}, \mathcal{X}_{Q_{\alpha}}^{\star}) \le \frac{\sqrt{\alpha}}{\mu}\\
        \land~~\| \hat{\mathbf{x}}^{k} - \bm{1}_{m} \otimes \operatorname{av}(\hat{\mathbf{x}}^{k})\| \le \eta\bigg\}.
    \end{multline*}
    For any $k \ge 0$ such that $\hat{\mathbf{x}}^{k} \in \mathcal{B}_{\alpha}$, Lemma~\ref{lem: Minimum eigenvalue of Q} ensures the existence of a point $\hat{\mathbf{x}}^{\prime} \in (\mathbb{R}^{n})^{m}$ such that $\|\hat{\mathbf{x}}^{\prime} - \hat{\mathbf{x}}^{k}\| \le \delta$, $\nabla Q_{\alpha}(\hat{\mathbf{x}}^{\prime}) = \bm{0}$,
    and for all $\hat{\mathbf{y}}$ with $\|\hat{\mathbf{y}} - \hat{\mathbf{x}}^{\prime}\| \le 3\delta$, $\lambda_{\min}(\nabla^{2} Q_{\alpha}(\hat{\mathbf{y}})) \ge \lambda_{\min}(\nabla^{2} F(\hat{\mathbf{y}})) \ge \mu$.
    This implies $\hat{\mathbf{x}}^{\prime} \in \mathcal{X}_{Q_{\alpha}}^{\star}$ and lies within distance $\delta$ of $\hat{\mathbf{x}}^{k}$. Moreover, if $\|\nabla Q_{\alpha}(\hat{\mathbf{x}}^{k})\| \le \sqrt{\alpha}$ also holds, then by local strong convexity,
    \begin{gather*}
         \textup{dist}(\hat{\mathbf{x}}^{k}, \mathcal{X}_{Q_{\alpha}}^{\star}) \le \|\hat{\mathbf{x}}^{k} - \hat{\mathbf{x}}^{\prime}\| \le \frac{\|\nabla Q_{\alpha}(\hat{\mathbf{x}}^{k}) - \nabla Q_{\alpha}(\hat{\mathbf{x}}^{\prime})\|}{\mu} \le \frac{\sqrt{\alpha}}{\mu}.
    \end{gather*}
    Thus, $\mathcal{A}_{1}(K) \subset \mathcal{A}_{0}(K)$, and
    \begin{gather}
    \label{eq: First time close}
        \mathbb{P} [\mathcal{A}_{0}(K)] \ge \mathbb{P} [\mathcal{A}_{1}(K)] \ge 1 - 4K^{2}\mathrm{e}^{-\rho}.
    \end{gather}
    By Lemma \ref{lem: Stay close to minimizers} and the time-invariance of the update \eqref{eq: NDGD}, for any $\rho \ge \max\{\rho_{2},\rho_{4},\rho_{5}\}$
    \begin{gather*}
        \mathbb{P} \bigg[\textup{dist} (\hat{\mathbf{x}}^{K}, \mathcal{X}_{Q_{\alpha}}^{\star}) \le \frac{2\sqrt{\alpha}}{\mu}~\mid~\mathcal{A}_{0}(K) \bigg] \ge 1 - K(K+1)\mathrm{e}^{-\rho},
    \end{gather*}
    because conditioning on $\mathcal{A}_{0}(K)$ ensures there is an iterate before $K$ that satisfies the corresponding initial requirement. By total probability, given \eqref{eq: First time close},
    \begin{align}
    \label{eq: Dist prob}
        \begin{aligned}
            &\mathbb{P} \bigg[\textup{dist} (\hat{\mathbf{x}}^{K}, \mathcal{X}_{Q_{\alpha}}^{\star}) \le \frac{2\sqrt{\alpha}}{\mu} \bigg]\\
            \ge &\mathbb{P} \bigg[\textup{dist} (\hat{\mathbf{x}}^{K}, \mathcal{X}_{Q_{\alpha}}^{\star}) \le \frac{2\sqrt{\alpha}}{\mu}~\mid~\mathcal{A}_{0}(K) \bigg] \cdot \mathbb{P} [\mathcal{A}_{0}(K)]\\
            \ge &(1 - K(K+1)\mathrm{e}^{-\rho})(1 - 4K^{2}\mathrm{e}^{-\rho})\\
            \ge &1 - (5K^{2} + K)\mathrm{e}^{-\rho} + 4K^{3}(K+1)\mathrm{e}^{-2\rho}.
        \end{aligned}
    \end{align}
    By Lemma \ref{lem: Consensus error} with $k = K$ and $\|\hat{\mathbf{x}}^{0} - \bm{1}_{m} \otimes \mathrm{av}(\hat{\mathbf{x}}^{0})\| = 0$,
    \begin{gather}
    \label{eq: Consensus error at K}
        \mathbb{P}[\|\hat{\mathbf{x}}^{K} - \bm{1}_{m} \otimes \operatorname{av}(\hat{\mathbf{x}}^{K})\| \le \zeta] \ge 1 - K\mathrm{e}^{-\rho}.
    \end{gather}
    Applying Boole-Fréchet Inequality \cite[Proposition~II.1]{qin2025convergence}, for any $\zeta > 0$,
    \begin{align*}
        &\mathbb{P} \bigg[\textup{dist} (\hat{\mathbf{x}}^{K}, \mathcal{X}_{Q_{\alpha}}^{\star}) \le \frac{2\sqrt{\alpha}}{\mu}~~\land~~\|\hat{\mathbf{x}}^{K} - \bm{1}_{m} \otimes \operatorname{av}(\hat{\mathbf{x}}^{K})\| \le \zeta\bigg] \\
        \ge &\mathbb{P}\bigg[\|\hat{\mathbf{x}}^{K} - \bm{1}_{m} \otimes \operatorname{av}(\hat{\mathbf{x}}^{K})\| \le \zeta\bigg]\\
        \quad&- (1 - \mathbb{P} \bigg[\textup{dist} (\hat{\mathbf{x}}^{K}, \mathcal{X}_{Q_{\alpha}}^{\star}) \le \frac{2\sqrt{\alpha}}{\mu} \bigg]).
    \end{align*}
    Then, with $\zeta$ per \eqref{eq: Consensus error}, there exists $\rho_{6} \ge 1$ such that $\zeta \le \sqrt{\alpha}/\mu$ for every $\rho \ge \rho_{6}$. In view of \eqref{eq: Dist prob} and \eqref{eq: Consensus error at K}, it follows that for any $\rho \ge \max\{\rho_{1},\rho_{2},\rho_{4},\rho_{5},\rho_{6}\}$,
    \begin{multline*}
        \mathbb{P} \bigg[\textup{dist} (\hat{\mathbf{x}}^{K}, \mathcal{X}_{Q_{\alpha}}^{\star}) \le \frac{2\sqrt{\alpha}}{\mu}~~\land~~\|\hat{\mathbf{x}}^{K} - \bm{1}_{m} \otimes \operatorname{av}(\hat{\mathbf{x}}^{K})\| \le \zeta\bigg] \\
        \ge 1 - (5K^{2} + 2K)\mathrm{e}^{-\rho} + 4K^{3}(K+1)\mathrm{e}^{-2\rho}.
    \end{multline*}
    Given $K \ge 0$, if $\hat{\mathbf{x}}^{K}$ satisfies the conditions that there exists $\hat{\mathbf{x}}^{\star} \in \mathcal{X}_{Q_{\alpha}}^{\star}$, $\|\hat{\mathbf{x}}^{\star} - \hat{\mathbf{x}}^{K}\| \le 2\sqrt{\alpha}/\mu$ and $\|\hat{\mathbf{x}}^{K} - \bm{1}_{m} \otimes \operatorname{av}(\hat{\mathbf{x}}^{K})\| \le \sqrt{\alpha}/\mu$, then there exists $\hat{\mathbf{x}}^{\star} \in \mathcal{X}_{Q_{\alpha}}^{\star}$ such that $\|\hat{\mathbf{x}}^{\star} - \hat{\mathbf{x}}^{K}\| \le 2\sqrt{\alpha}/\mu$ and 
    \begin{multline*}
        \|\hat{\mathbf{x}}^{\star} - \bm{1}_{m} \otimes \operatorname{av}(\hat{\mathbf{x}}^{\star})\|  \le \|\hat{\mathbf{x}}^{\star} - \hat{\mathbf{x}}^{K}\| + \|\hat{\mathbf{x}}^{K} - \bm{1}_{m} \otimes \operatorname{av}(\hat{\mathbf{x}}^{K})\|\\
        + \|\bm{1}_{m} \otimes \operatorname{av}(\hat{\mathbf{x}}^{K}) - \bm{1}_{m} \otimes \operatorname{av}(\hat{\mathbf{x}}^{\star})\|\\
        \le 2\|\hat{\mathbf{x}}^{\star} - \hat{\mathbf{x}}^{K}\| + \|\hat{\mathbf{x}}^{K} - \bm{1}_{m} \otimes \operatorname{av}(\hat{\mathbf{x}}^{K})\| \le \frac{5}{\mu}\sqrt{\alpha}.
    \end{multline*}
    Therefore, the event set
    \begin{gather*}
        \bigg\{\textup{dist} (\hat{\mathbf{x}}^{K}, \mathcal{X}_{Q_{\alpha}}^{\star}) \le \frac{2\sqrt{\alpha}}{\mu}~~\land~~\|\hat{\mathbf{x}}^{K} - \bm{1}_{m} \otimes \operatorname{av}(\hat{\mathbf{x}}^{K})\| \le \zeta\bigg\}
    \end{gather*}
    is a subset of the event set
    \begin{multline*}
        \mathcal{A}_{5}(K) = \bigg\{\exists \hat{\mathbf{x}}^{\star} \in \mathcal{X}_{Q_{\alpha}}^{\star},~ \|\hat{\mathbf{x}}^{K} -  \hat{\mathbf{x}}^{\star}\| \le \frac{2\sqrt{\alpha}}{\mu}\\
        \land~~\|\hat{\mathbf{x}}^{K} - \bm{1}_{m} \otimes \operatorname{av}(\hat{\mathbf{x}}^{K})\| \le \zeta~~\land~~\|\hat{\mathbf{x}}^{\star} - \bm{1}_{m} \otimes \operatorname{av}(\hat{\mathbf{x}}^{\star})\| \le \frac{5}{\mu}\sqrt{\alpha}\bigg\}
    \end{multline*}
    and $\mathbb{P} \big[\mathcal{A}_{5}(K)\big] \ge 1 - (5K^{2} + 2K)\mathrm{e}^{-\rho} + 4K^{3}(K+1)\mathrm{e}^{-2\rho}$.
    By Lemma \ref{lem: Local minimizers of Q}, for all $i \in \mathcal{V}$, and any $\hat{\mathbf{x}}^{\star} \in \mathcal{X}_{Q_{\alpha}}^{\star}$ such that $\|\hat{\mathbf{x}}^{K} - \hat{\mathbf{x}}^{\star}\| \le 2\sqrt{\alpha}/\mu$ and $\|\hat{\mathbf{x}}^{\star} - \bm{1}_{m} \otimes \operatorname{av}(\hat{\mathbf{x}}^{\star})\| \le 5\sqrt{\alpha}/\mu$, it follows that
    \begin{multline*}
        \textup{dist} (\hat{\mathbf{x}}_{i}^{K}, \mathcal{X}_{f}^{\star}) \le \|\hat{\mathbf{x}}_{i}^{K} - \hat{\mathbf{x}}_{i}^{\star}\| + \textup{dist} (\hat{\mathbf{x}}_{i}^{\star}, \mathcal{X}_{f}^{\star})\\
        \le \|\hat{\mathbf{x}}^{K} - \hat{\mathbf{x}}^{\star}\| + \textup{dist} (\hat{\mathbf{x}}_{i}^{\star}, \mathcal{X}_{f}^{\star}) \le (7+\frac{5\sqrt{m}L_{F}^{g}}{\mu})\frac{\sqrt{\alpha}}{\mu}.
    \end{multline*}
    Then, event set $\mathcal{A}_{5}(K)$
    is a subset of event set
    \begin{multline*}
        \mathcal{A}_{6}(K) = \bigg\{\forall i \in \mathcal{V},~\textup{dist} (\hat{\mathbf{x}}_{i}^{K}, \mathcal{X}_{f}^{\star}) \le (7+\frac{5\sqrt{m}L_{F}^{g}}{\mu})\frac{\sqrt{\alpha}}{\mu}\\~~\land~~\|\hat{\mathbf{x}}_{i}^{K} - \operatorname{av}(\hat{\mathbf{x}}^{K})\| \le \zeta\bigg\}
    \end{multline*}
    Finally, there exists $\rho_{7} \ge \max\{\rho_{1},\rho_{2},\rho_{4},\rho_{5},\rho_{6}\}$ such that for any $\rho \ge \rho_{7}$, 
    \begin{multline}
    \label{eq: Neighbourhood of local minimizers}
        \mathbb{P} \bigg[\mathcal{A}_{6}(K) \bigg] \ge 1 - (5K^{2} + 2K)\mathrm{e}^{-\rho} + 4K^{3}(K+1)\mathrm{e}^{-2\rho}\\
         \ge 1 - \mathrm{e}^{-\tfrac{\rho}{2}}.
    \end{multline}
    Choosing $\rho_{0} \ge \max\{-2 \ln{(p)},\rho_{7}\}$, \eqref{eq: Theorem 2} follows from \eqref{eq: Neighbourhood of local minimizers}.
\end{proof}

\begin{figure*}
    \centering
    \begin{subfigure}[h]{0.24\textwidth}
        \centering
        \includegraphics[width=\columnwidth]{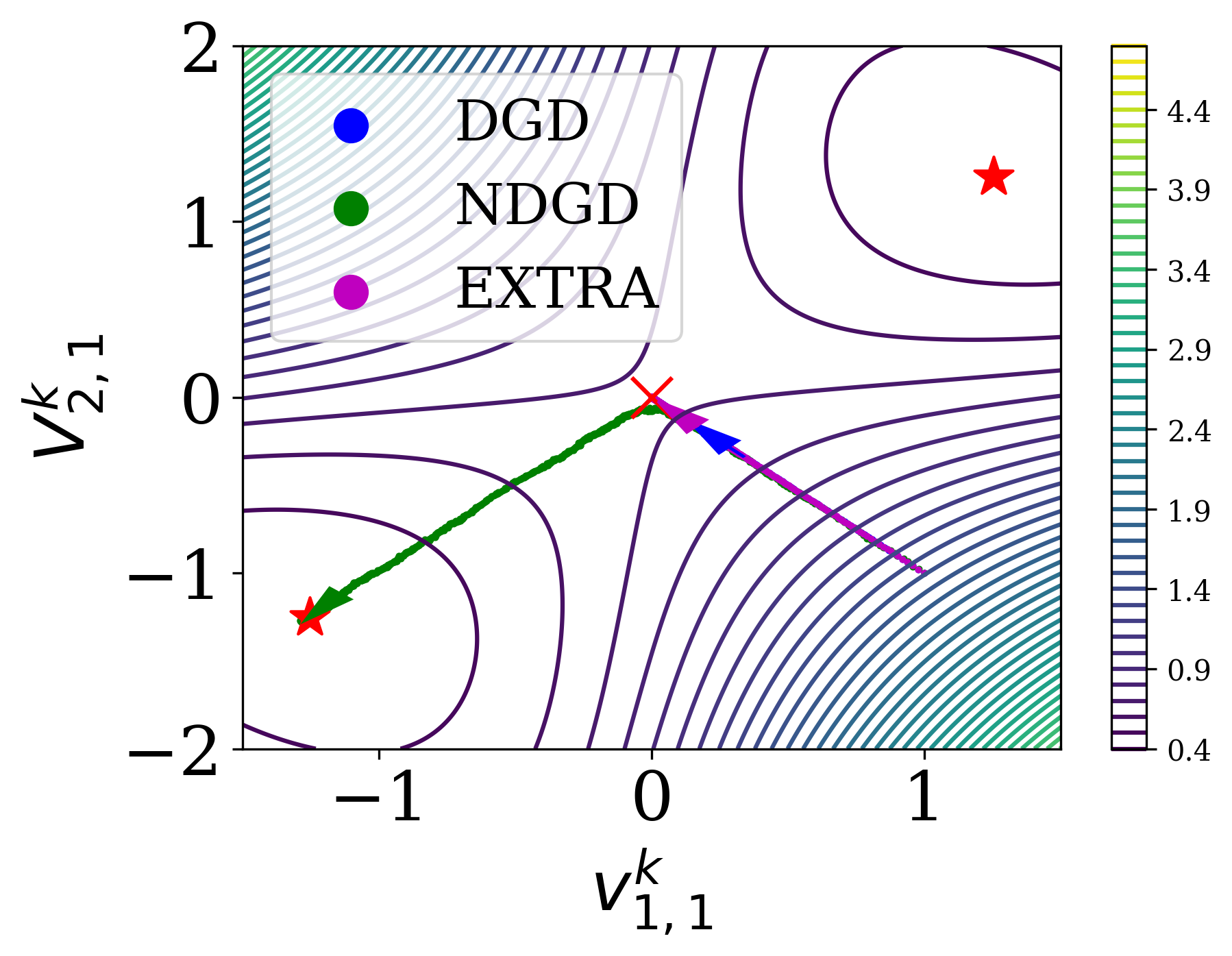}
        \caption{Trajectory of $(\mathbf{v}_{1,1}^{k},\mathbf{V}_{2,1}^{k})$ at agent $1$, with $\times$ and $\star$ denoting the saddle point and the local minimizer.}
        \label{fig: Contour plot 1}
    \end{subfigure}
    \begin{subfigure}[h]{0.24\textwidth}
        \centering
        \includegraphics[width=\columnwidth]{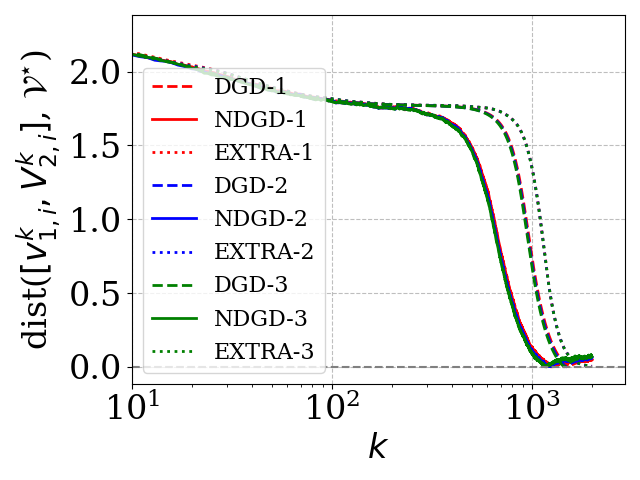}
        \caption{Evolution of distance between $(\mathbf{v}_{1,i}^{k},\mathbf{V}_{2,i}^{k})$ at agent $i$ and set $\mathcal{V}^{\star}$ at agent $1$ and $2$.}
        \label{fig: Iteration plot 1}
    \end{subfigure}
    \begin{subfigure}[h]{0.24\textwidth}
        \centering
        \includegraphics[width=\columnwidth]{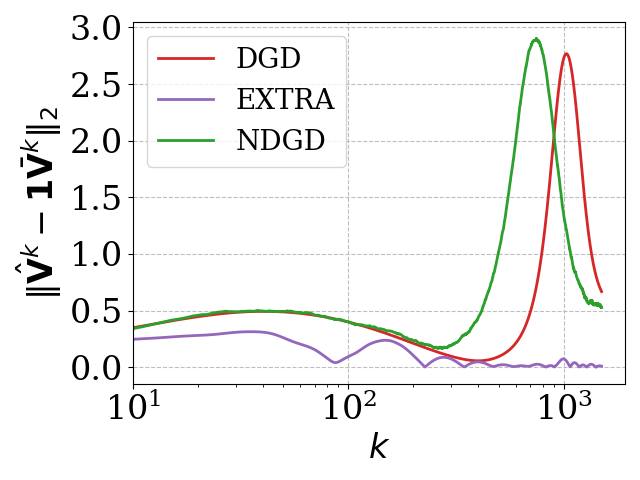}
        \caption{Evolution of the consensus error at agent $1$, where $\hat{\mathbf{V}}^{k} = [(\mathbf{v}_{1,1}^{k}, \mathbf{V}_{2,1}^{k}]^\top$, and $\bar{\mathbf{V}}^{k} = \frac{1}{m}\sum_{i=1}^{m}[\mathbf{v}_{1,i}^{k}, \mathbf{V}_{2,i}^{k}]^\top$.}
        \label{fig: Consensus error 1}
    \end{subfigure}
    \begin{subfigure}[h]{0.24\textwidth}
        \centering
        \includegraphics[width=\columnwidth]{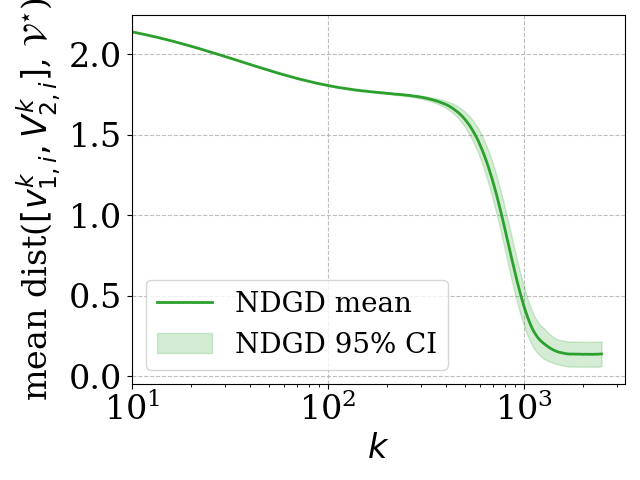}
        \caption{Mean distance between $(\mathbf{v}_{1,1}^{k},\mathbf{V}_{2,1}^{k})$ and the set $\mathcal{V}^{\star}$, with 95\% confidence intervals.}
        \label{fig: Statistical comparison 1}
    \end{subfigure}
    \centering
    \caption{Second-order properties of \textbf{NDGD} and \textbf{DGD} for the binary classification example. The set of local minimizers of $\tilde{L}$ is denoted by $\mathcal{V}^{\star}$ and marked by red $\star$.}
    \label{fig: Binary classification example}
\end{figure*}


\section{Numerical Examples}
\label{sec: Numerical Examples}
In this section, through one illustrative numerical example, we demonstrate how noise can help the distributed gradient descent algorithm efficiently escape saddle points and converge to the vicinity of local minimizers. Another non-convex quartic optimization example can be found in \cite{qin2025convergence}.

We then explore training a basic neural network for binary classification as a classic example of non-convex distributed optimization \cite{daneshmand2020second,vlaski2021distributed2}. Given a dataset $(X, Y)$ where $X = \{x^{(1)}, x^{(2)}, \dots, x^{(m)}\}$ are the input features and $Y = \{y^{(1)}, y^{(2)}, \dots, y^{(m)}\}$ are the corresponding binary class labels, where $x^{(i)} \in \mathbb{R}^{n}$ and $y^{(i)} \in \{0,1\}$ for all $i = 1,\dots,m$. The neural network's task is to learn the weights $\mathbf{v}_{1} \in \mathbb{R}^{d}$, $\mathbf{V}_{2} \in \mathbb{R}^{d \times n}$ that minimize the binary cross-entropy loss:
\begin{align*}
    L(\mathbf{v}_{1},\mathbf{V}_{2}) = -\frac{1}{m} \sum_{i=1}^{m} \left[ y^{(i)} \ln(\hat{y}^{(i)}) + (1 - y^{(i)}) \ln(1 - \hat{y}^{(i)}) \right],
\end{align*}
where $\hat{y}^{(i)}$ is the predicted probability for the $i$-th input computed as a sigmoid activation function:
\begin{gather*}
    \hat{y}^{(i)} = \frac{1}{1 + \exp(-\mathbf{v}_{1}^{\top}\mathbf{V}_{2}x^{(i)})}.
\end{gather*}
This objective can be simplified as an equivalent logistic loss with regularization terms:
\begin{gather*}
    L(\mathbf{v}_{1},\mathbf{V}_{2}) = \frac{1}{m} \sum_{i=1}^{m} \ln(1 + \exp(-\Tilde{y}^{(i)}\mathbf{v}_{1}^{\top}\mathbf{V}_{2}x^{(i)})),
\end{gather*}
where for all $i = 1,\dots,m$,
\begin{gather*}
    \Tilde{y}^{(i)} = \begin{cases}
        -1, & \text{if}~y^{(i)}=0;\\
        1, & \text{if}~y^{(i)}=1.
    \end{cases}
\end{gather*}
Then, as an instance of Problem \eqref{eq: Unconstrained optimization problem}, the learning problem with regularization term can be formulated as
\begin{gather*}
    \min_{\mathbf{v}_{1},\mathbf{V}_{2}} \Tilde{L}(\mathbf{v}_{1},\mathbf{V}_{2}) = \sum_{i=1}^{m} \Tilde{L}_{i}(\mathbf{v}_{1},\mathbf{V}_{2}),
\end{gather*}
where
\begin{multline*}
    \Tilde{L}_{i}(\mathbf{v}_{1},\mathbf{V}_{2}) = \frac{1}{m}  \ln(1 + \exp(-\Tilde{y}^{(i)}\mathbf{v}_{1}^{\top}\mathbf{V}_{2}x^{(i)}))\\
    + \frac{\eta}{2m}(\|\mathbf{v}_{1}\|^{2} + \|\mathbf{V}_{2}\|_{F}^{2}).
\end{multline*}
For a straightforward visualization of the learning problem, we select parameters that render all variables as scalars, setting $n = d = 1$. Consider we engage \color{blue}$m=50$ \color{black} agents to collaboratively solve this problem, connecting them in a regular graph with a degree of $2$. Note that each agent $i$ only knows its local objective $\Tilde{L}_{i}(\mathbf{v}_{1},\mathbf{V}_{2})$.
%
By examining the Hessian at $(0,0)$,
it follows that $\Tilde{L}(\mathbf{v}_{1},\mathbf{V}_{2})$ has three critical points: two local minimizers and one saddle point, as illustrated in Fig.~\ref{fig: Contour plot 1}. The saddle point (marked by $\times$) is located at $(0,0)$, and the local minimizers (marked by $\star$) lie symmetrically in the positive and negative quadrants. We set $\eta = 0.1$, generate $\Tilde{y}^{(i)}$ uniformly from $\{-1,1\}$, and $x^{(i)} \sim \mathcal{N}(\Tilde{y}^{(i)},1)$. \color{blue}The algorithm parameters are $\alpha = 0.9$, $\sigma = 0.55$, and $K = 2500$. \color{black} In Fig.~\ref{fig: Contour plot 1}, agents are initialized close to the stable manifold of the saddle point. \textbf{DGD} approaches the saddle, whereas both \textbf{NDGD} and \textbf{EXTRA} move toward a local minimizer. \color{blue}For clarity, only agent $1$ is shown; additional agents are illustrated in subsequent figures. In Fig.~\ref{fig: Iteration plot 1}, \textbf{DGD} requires about $600$ iterations to escape, while \textbf{NDGD} escapes within about $100$ iterations and converges to a neighborhood of a local minimizer. \textbf{EXTRA} also converges to a local minimizer, but more slowly than \textbf{NDGD}. Trajectories of three representative agents are included to illustrate distributed behavior. We also report the consensus error $\|\hat{\mathbf{x}}_i^k - \operatorname{av}(\hat{\mathbf{x}}^k)\|$, noting that the bound $\zeta$ is conservative and therefore omitted for clarity. Results are averaged over multiple trials with $95\%$ confidence intervals. \textbf{EXTRA} consistently achieves lower consensus error, suggesting a potential direction for future work that combines its strong consensus properties with stochastic perturbations. \color{black} This example highlights the effectiveness of \textbf{NDGD} in escaping saddle points compared to \textbf{DGD}, while also showing competitive performance relative to \textbf{EXTRA}.

\section{Conclusions and Discussion}
\label{sec: Conclusions and Discussion}
\color{blue} This paper studied a noisy distributed gradient descent (\textbf{NDGD}) method for distributed non-convex optimization over networks. By introducing stochastic perturbations into the distributed updates, the proposed method enables saddle-point escape while preserving consensus and distributed implementation. Under suitable regularity conditions, high-probability convergence guarantees were established for all agents to a neighborhood of a common local minimizer, with explicit characterization of both optimality and consensus errors. Numerical experiments demonstrated improved saddle-point escape and solution quality compared with standard \textbf{DGD} and \textbf{EXTRA}. Future work will consider weaker geometric assumptions and extensions to more general distributed optimization settings.
\color{black}


\section*{References}

\bibliographystyle{unsrt}  
\bibliography{Reference}

@article{qin2025convergence,
  title={Convergence Analysis of Noisy Distributed Gradient Descent for Non-Convex Optimization---Saddle Point Escape},
  author={Qin, Lei and Cantoni, Michael and Pu, Ye},
  journal={arXiv preprint arXiv:2510.20246},
  year={2025}
}

@article{molzahn2017survey,
  title={A Survey of Distributed Optimization and Control Algorithms for Electric Power Systems},
  author={Molzahn, Daniel K. and D{\"o}rfler, Florian and Sandberg, Henrik and Low, Steven H. and Chakrabarti, Sambuddha and Baldick, Ross and Lavaei, Javad},
  journal={IEEE Transactions on Smart Grid},
  volume={8},
  number={6},
  pages={2941--2962},
  year={2017}
}

@article{camponogara2010distributed,
  title={Distributed Optimization for Model Predictive Control of Linear Dynamic Networks with Control-Input and Output Constraints},
  author={Camponogara, Eduardo and Scherer, Helton F.},
  journal={IEEE Transactions on Automation Science and Engineering},
  volume={8},
  number={1},
  pages={233--242},
  year={2010}
}

@inproceedings{rabbat2004distributed,
  title={Distributed Optimization in Sensor Networks},
  author={Rabbat, Michael and Nowak, Robert},
  booktitle={Proc. 3rd Int. Symp. Information Processing in Sensor Networks (IPSN)},
  pages={20--27},
  year={2004}
}

@inproceedings{johansson2007simple,
  title={A Simple Peer-to-Peer Algorithm for Distributed Optimization in Sensor Networks},
  author={Johansson, Bjorn and Rabi, Maben and Johansson, Mikael},
  booktitle={Proc. IEEE Conf. Decision and Control (CDC)},
  pages={4705--4710},
  year={2007}
}

@article{boyd2011distributed,
  title={Distributed Optimization and Statistical Learning via the Alternating Direction Method of Multipliers},
  author={Boyd, Stephen and Parikh, Neal and Chu, Eric and Peleato, Borja and Eckstein, Jonathan},
  journal={Foundations and Trends in Machine Learning},
  volume={3},
  number={1},
  pages={1--122},
  year={2011}
}

@article{konevcny2015federated,
  title={Federated Optimization: Distributed Optimization Beyond the Datacenter},
  author={Kone{\v{c}}n{\`y}, Jakub and McMahan, Brendan and Ramage, Daniel},
  journal={arXiv preprint arXiv:1511.03575},
  year={2015}
}

@inproceedings{mcmahan2017communication,
  title={Communication-Efficient Learning of Deep Networks from Decentralized Data},
  author={McMahan, Brendan and Moore, Eider and Ramage, Daniel and Hampson, Seth and y Arcas, Blaise Aguera},
  booktitle={Proc. Int. Conf. Artificial Intelligence and Statistics (AISTATS)},
  pages={1273--1282},
  year={2017}
}

@article{terelius2011decentralized,
  title={Decentralized Multi-Agent Optimization via Dual Decomposition},
  author={Terelius, H{\aa}kan and Topcu, Ufuk and Murray, Richard M.},
  journal={IFAC Proceedings Volumes},
  volume={44},
  number={1},
  pages={11245--11251},
  year={2011}
}

@article{qu2017harnessing,
  title={Harnessing Smoothness to Accelerate Distributed Optimization},
  author={Qu, Guannan and Li, Na},
  journal={IEEE Transactions on Control of Network Systems},
  volume={5},
  number={3},
  pages={1245--1260},
  year={2017}
}

@inproceedings{tsianos2012distributed,
  title={Distributed Strongly Convex Optimization},
  author={Tsianos, Konstantinos I. and Rabbat, Michael G.},
  booktitle={Proc. Allerton Conf. Communication, Control, and Computing},
  pages={593--600},
  year={2012}
}

@article{yi2022primal,
  title={A Primal-Dual SGD Algorithm for Distributed Nonconvex Optimization},
  author={Yi, Xinlei and Zhang, Shengjun and Yang, Tao and Chai, Tianyou and Johansson, Karl Henrik},
  journal={IEEE/CAA Journal of Automatica Sinica},
  volume={9},
  number={5},
  pages={812--833},
  year={2022}
}

@article{nedic2018distributed,
  title={Distributed Optimization for Control},
  author={Nedi{\'c}, Angelia and Liu, Ji},
  journal={Annual Review of Control, Robotics, and Autonomous Systems},
  volume={1},
  pages={77--103},
  year={2018}
}

@article{nedic2010constrained,
  title={Constrained Consensus and Optimization in Multi-Agent Networks},
  author={Nedic, Angelia and Ozdaglar, Asuman and Parrilo, Pablo A.},
  journal={IEEE Transactions on Automatic Control},
  volume={55},
  number={4},
  pages={922--938},
  year={2010}
}

@article{jakovetic2014fast,
  title={Fast Distributed Gradient Methods},
  author={Jakoveti{\'c}, Du{\v{s}}an and Xavier, Joao and Moura, Jos{\'e} M. F.},
  journal={IEEE Transactions on Automatic Control},
  volume={59},
  number={5},
  pages={1131--1146},
  year={2014}
}

@article{shi2015extra,
  title={EXTRA: An Exact First-Order Algorithm for Decentralized Consensus Optimization},
  author={Shi, Wei and Ling, Qing and Wu, Gang and Yin, Wotao},
  journal={SIAM Journal on Optimization},
  volume={25},
  number={2},
  pages={944--966},
  year={2015}
}

@inproceedings{di2015distributed,
  title={Distributed Nonconvex Optimization Over Networks},
  author={Di Lorenzo, Paolo and Scutari, Gesualdo},
  booktitle={Proc. IEEE Int. Workshop Computational Advances in Multi-Sensor Adaptive Processing (CAMSAP)},
  pages={229--232},
  year={2015}
}

@article{di2016next,
  title={NEXT: In-Network Nonconvex Optimization},
  author={Di Lorenzo, Paolo and Scutari, Gesualdo},
  journal={IEEE Transactions on Signal and Information Processing over Networks},
  volume={2},
  number={2},
  pages={120--136},
  year={2016}
}

@article{nedic2017achieving,
  title={Achieving Geometric Convergence for Distributed Optimization Over Time-Varying Graphs},
  author={Nedic, Angelia and Olshevsky, Alex and Shi, Wei},
  journal={SIAM Journal on Optimization},
  volume={27},
  number={4},
  pages={2597--2633},
  year={2017}
}

@article{swenson2022distributed,
  title={Distributed Stochastic Gradient Descent: Nonconvexity, Nonsmoothness, and Convergence to Local Minima},
  author={Swenson, Brian and Murray, Ryan and Poor, H. Vincent and Kar, Soummya},
  journal={Journal of Machine Learning Research},
  volume={23},
  number={1},
  pages={14751--14812},
  year={2022}
}

@article{daneshmand2020second,
  title={Second-Order Guarantees of Distributed Gradient Algorithms},
  author={Daneshmand, Amir and Scutari, Gesualdo and Kungurtsev, Vyacheslav},
  journal={SIAM Journal on Optimization},
  volume={30},
  number={4},
  pages={3029--3068},
  year={2020}
}

@article{yuan2016convergence,
  title={On the Convergence of Decentralized Gradient Descent},
  author={Yuan, Kun and Ling, Qing and Yin, Wotao},
  journal={SIAM Journal on Optimization},
  volume={26},
  number={3},
  pages={1835--1854},
  year={2016}
}

@inproceedings{ge2015escaping,
  title={Escaping From Saddle Points---Online Stochastic Gradient for Tensor Decomposition},
  author={Ge, Rong and Huang, Furong and Jin, Chi and Yuan, Yang},
  booktitle={Proc. Conf. Learning Theory (COLT)},
  pages={797--842},
  year={2015}
}

@article{zeng2018nonconvex,
  title={On Nonconvex Decentralized Gradient Descent},
  author={Zeng, Jinshan and Yin, Wotao},
  journal={IEEE Transactions on Signal Processing},
  volume={66},
  number={11},
  pages={2834--2848},
  year={2018}
}

@article{nedic2009distributed,
  title={Distributed Subgradient Methods for Multi-Agent Optimization},
  author={Nedic, Angelia and Ozdaglar, Asuman},
  journal={IEEE Transactions on Automatic Control},
  volume={54},
  number={1},
  pages={48--61},
  year={2009}
}

@article{jin2021nonconvex,
  title={On Nonconvex Optimization for Machine Learning: Gradients, Stochasticity, and Saddle Points},
  author={Jin, Chi and Netrapalli, Praneeth and Ge, Rong and Kakade, Sham M. and Jordan, Michael I.},
  journal={Journal of the ACM},
  volume={68},
  number={2},
  pages={1--29},
  year={2021}
}

@inproceedings{du2017gradient,
  title={Gradient Descent Can Take Exponential Time to Escape Saddle Points},
  author={Du, Simon S. and Jin, Chi and Lee, Jason D. and Jordan, Michael I. and Singh, Aarti and P{\'o}czos, Barnab{\'a}s},
  booktitle={Advances in Neural Information Processing Systems (NeurIPS)},
  volume={30},
  year={2017}
}

@inproceedings{jin2017escape,
  title={How to Escape Saddle Points Efficiently},
  author={Jin, Chi and Ge, Rong and Netrapalli, Praneeth and Kakade, Sham M. and Jordan, Michael I.},
  booktitle={Proc. Int. Conf. Machine Learning (ICML)},
  pages={1724--1732},
  year={2017}
}

@inproceedings{swenson2019distributed,
  title={Distributed Gradient Descent: Nonconvergence to Saddle Points and the Stable-Manifold Theorem},
  author={Swenson, Brian and Murray, Ryan and Poor, H. Vincent and Kar, Soummya},
  booktitle={Proc. Allerton Conf. Communication, Control, and Computing},
  pages={595--601},
  year={2019}
}

@article{vlaski2021distributed1,
  title={Distributed Learning in Non-Convex Environments---Part I: Agreement at a Linear Rate},
  author={Vlaski, Stefan and Sayed, Ali H.},
  journal={IEEE Transactions on Signal Processing},
  volume={69},
  pages={1242--1256},
  year={2021}
}

@article{vlaski2021distributed2,
  title={Distributed Learning in Non-Convex Environments---Part II: Polynomial Escape From Saddle Points},
  author={Vlaski, Stefan and Sayed, Ali H.},
  journal={IEEE Transactions on Signal Processing},
  volume={69},
  pages={1257--1270},
  year={2021}
}

@inproceedings{qin2023second,
  title={Second-Order Properties of Noisy Distributed Gradient Descent},
  author={Qin, Lei and Cantoni, Michael and Pu, Ye},
  booktitle={Proc. IEEE Conf. Decision and Control (CDC)},
  pages={7324--7329},
  year={2023}
}

@inproceedings{swenson2019annealing,
  title={Annealing for Distributed Global Optimization},
  author={Swenson, Brian and Kar, Soummya and Poor, H. Vincent and Moura, Jos{\'e} M. F.},
  booktitle={Proc. IEEE Conf. Decision and Control (CDC)},
  pages={3018--3025},
  year={2019}
}

@article{wang2023decentralized,
  title={Decentralized Nonconvex Optimization With Guaranteed Privacy and Accuracy},
  author={Wang, Yongqiang and Ba{\c{s}}ar, Tamer},
  journal={Automatica},
  volume={150},
  pages={110858},
  year={2023}
}

@article{bo2024quantization,
  title={Quantization Avoids Saddle Points in Distributed Optimization},
  author={Bo, Yanan and Wang, Yongqiang},
  journal={Proceedings of the National Academy of Sciences},
  volume={121},
  number={17},
  pages={e2319625121},
  year={2024}
}

\end{document}